\newcommand{\sprodA}[2]{\langle #1  ,#2 \rangle_A }
\newcommand{\sprod}[2]{\langle #1  ,#2 \rangle}
\def\dvA{\text{div}_A}
\newcommand{\modA}[1]{\left| #1\right|_A}
\def\lotw{l.o.t(s^3\lambda^4\varphi^3 |w|^2)}
\def\lotnw{l.o.t.(s\lambda^2\varphi|\nabla w|^2)}
\def\lotwnw{l.o.t(s^3\lambda^4\varphi^3 |w|^2+s\lambda^2\varphi|\nabla w|^2)}
\newcommand{\DeltaA}[1]{\Delta_A #1}
\newcommand{\dnuA}[1]{\dfrac{\partial #1}{\partial\nu_A} }
\def\ra{\rightarrow}
\newtheorem{theorem}{Theorem}
\newtheorem{proposition}{Proposition}
\newtheorem{remark}{Remark}
\newtheorem{lemma}{Lemma}
\newtheorem{comments}{Comments}
\newcommand{\R}{\mathbb{R}}
\newcommand{\A}{\mathcal{A}}
\newcommand{\g}{\overline{g}}
\newcommand{\w}{\tilde{w}}
\numberwithin{equation}{section}
\newcommand{\fin}{\hfill\rule{2mm}{2mm}\medskip }
\subjclass[2010]{35R30, 35K10, 35K57, 	93B07 }
\keywords{Parabolic systems, , Reaction-diffusion systems, Carleman estimates, Inverse problems, Source estimates.}
 \thanks{$^{*}$Faculty of Mathematics, Al.I.Cuza University and Octav Mayer Institute of Mathematics,   Ia\c{s}i, Romania, e-mail: catalin.lefter@uaic.ro. Supported by a grant of the Ministry of Research,
 	Innovation and Digitization, CNCS - UEFISCDI,
 		project number PN-III-P4-PCE-2021-0006, within PNCDI III.}
 \thanks{$^{**}$Faculty of Mathematics, Al.I.Cuza University and Octav Mayer Institute of Mathematics,   Ia\c{s}i, Romania, e-mail: alexandra.melnig@uaic.ro. Supported by European Social Fund, through Operational Programme Human Capital 2014-2020, project number POCU/993/6/13/153322, project title "Educational and training support for PhD students and young
 researchers in preparation for insertion into the labor market". }
\begin{document}
\title[Source stability for reaction-diffusion systems]{Reaction-diffusion systems in annular domains: source stability estimates with  boundary observations}
\maketitle
{\footnotesize
	
	\centerline{\scshape C\u at\u alin-George Lefter$^*$, Elena-Alexandra Melnig$^{**}$}
	\medskip
	{\footnotesize
		
	}}

\begin{abstract}

{We consider systems of reaction-diffusion  equations coupled in zero  order terms, with general homogeneous boundary conditions in domains with a particular geometry (annular type domains). We establish Lipschitz stability estimates in $L^2$ norm   for the source in terms of the solution and/or its normal derivative on a connected component of the boundary. The main tools are represented by: appropriate Carleman estimates in $L^2$ norms, with boundary observations, and positivity improving properties for the solutions to parabolic equations and systems. }

\end{abstract}

\maketitle
\date{}

\section*{Introduction}
 In this paper we consider systems of linear and semilinear parabolic equations, coupled in zero-order terms, with general homogeneous boundary conditions  for each equation.  We are interested in obtaining stability estimates for the inverse problem of recovering the source of the system in terms of the solution measured on a part of the boundary. 
 

 We mention  the paper of O.Yu. Imanuvilov and M. Yamamoto, \cite{imayam1998}, as a starting point for our work; in this paper the authors consider linear parabolic problems and obtain    $L^2$ estimates for  the source  from a particular class of sources and with  observations on the solution either in a subdomain or on the boundary.   In their case the estimates involve norms of the solution and its time derivative for the case of internal observations or, in the case of boundary observations,  norms of the solution and its' gradient  and time derivative.
 
 For the case of internal observation more manageable classes of sources were considered in \cite{ladyA1} and \cite{ladyA2} and, with different approach, by using appropriate Carleman inequalities, stability estimates are obtained without measurements on the time derivative of the solution. In addition, in  \cite{ladyA2}, it is considered that the systems model reaction-diffusion processes and thus positivity of sources and solutions is also exploited in either linear or semilinear case.
 
  The systems of parabolic equations we consider here are also intended to model reaction-diffusion phenomena, with homogeneous  boundary conditions imposed to the components of the system, which may be Dirichlet, Neumann or Robin in arbitrary combination.
  The  observations made on the system are on the boundary and the respective quantities appearing in the stability estimates we obtain involve the solution and/or the conormal derivatives of the components of the solution, but no tangential component of the gradient or time derivative of the solution are needed.
  
  The classes of sources are further improved and we mention here the two main tools we use: First we obtain an appropriate Carleman estimate which, for technical reasons imposed by the need of particular auxiliary functions, require a special geometry of the domain that we call here of annular type.
  The second tool is represented by strong maximum type principles and invariance results for convex sets under the action of the parabolic flows defined by variational solutions. More precisely, we need positivity improving properties in an unique continuation result for positive variational solutions corresponding to positive sources in either linear case or in the semilinear models. Such facts occur under  sign conditions on the nonlinearities and  on the zero order coefficients representing the zero order coupling terms.

\section{Preliminaries and main results}

We consider  semilinear parabolic systems, with couplings involving only zero order terms, of the form: 
\begin{equation}\label{rd-sys}
\left\lbrace
\begin{array}{ll}
D_ty_i-\sum\limits_{j,k=1}^N D_j(a_i^{jk} D_ky_i)+\sum\limits_{k=1}^N b_i^k D_ky_i+f_i(t,x,y_1,...,y_n)=g_i &(0,T)\times\Omega,
\\
\displaystyle\beta_i(x)\frac{\partial y_i}{\partial \nu_{A_i}}+\eta_i(x)y_i=0  &(0,T)\times\partial\Omega  \\

\end{array} i\in\overline{1,n}.
\right.\end{equation} 
Here  $T>0$ and $\Omega\subset\R^N$ is  a bounded  connected domain with smooth boundary,   $\partial\Omega$ of class $C^2$.
Denote by $Q=(0,T)\times\Omega$ and $\partial_PQ=\{0\}\times\overline{\Omega}\cup[0,T]\times\partial\Omega$ the so called parabolic boundary of $Q$.
For the ease of notation, whenever we will refer to vector valued functions with elements in a given Lebesgue or Sobolev space, we will write, when no confusion is possible, $L^2(\Omega)$ instead of $ [L^2(\Omega)]^n$, $H^1(\Omega)$ instead of $[H^1(\Omega)]^n$, etc. Denote by $\langle\cdot,\cdot\rangle$ the canonical scalar product in $\R^N$ and by $\langle\cdot,\cdot\rangle_W$ the scalar product in a given Hilbert space $W$.

\medskip

We describe first the framework of the problem and the hypotheses:

\subsubsection*{\textbf{\rm (H1) }Geometry of the domain}

The boundary of the domain has the structure $\partial\Omega=\Gamma_0\cup\Gamma_1$, with $\Gamma_0,\Gamma_1$ connected components of $\partial\Omega$, and there exists $\psi_0\in C^2(\overline{\Omega},[k_0,k_1])$  for some $k_0<k_1$ with 
\begin{equation}\label{psi0}
\psi_0|_{\Gamma_0}=k_0,\,\psi_0|_{\Gamma_1}=k_1,\,|\nabla\psi_0|\not=0\text{ in } \overline\Omega.
\end{equation}

\begin{remark}

Essentially, such a function $\psi_0$ exists if  $\Gamma_0$ is a smooth deformation retract of $\overline\Omega$. More precisely, by adding some technical assumptions to what is usually named a deformation retract, such a $\psi_0$ exists if there is a smooth function $\Phi:\overline\Omega\times[0,1]\rightarrow\overline\Omega$ with the properties:
\begin{itemize}
	\item[-] $\Phi(\cdot,0)=\text{Id}$, $\Phi(\cdot,s)|_{\Gamma_0}=\text{Id}$, $s\in[0,1]$;
		\item[-] $\text{Range }\Phi(\cdot,1)=\Gamma_0$;
	\item[-] $\Phi(\cdot,s)$ is a diffeomorphism from $\overline \Omega$ onto its image for all $s\in[0,1)$;
	\item[-] $\Phi(\cdot,s)$ has strictly decreasing range in the sense that: if $s_1<s_2$ then $\Phi(\overline\Omega,s_2)\subset\Phi(\Omega,s_1)$ and
	$$
	\left|\frac{\partial \Phi}{\partial s}(x,s)\right|\not=0,\quad x\in \overline\Omega,s\in[0,1].
		$$

	.

\end{itemize} 
With such a $\Phi$, the auxiliary function  $\psi_0$ may be defined as 
$$
\psi_0(x)=1-s \quad\text{  for }\quad x\in\Phi(\Gamma_1,s),
$$ 
with $k_0=0,k_1=1.$

 Conversely, given $\psi_0$ with the required properties one may construct a deformation $\Phi$ as above. Without entering into details we describe this: consider first the vector field on $\overline\Omega$
 $$
 F(x)=\frac{\nabla\psi_0(x)}{|\nabla\psi_0(x)|^2}
 $$
and extend it smoothly and bounded to $\R^N$. Consider the flow associated to $F$:
$$
\frac{d}{ds}\gamma_x(s)=F(\gamma_x(s)), \, \gamma_x(0)=x.
$$
Observe that if $x\in\Gamma_0$, then $\gamma_x(s)\in\overline\Omega$ for $s\in[0,1]$ and $\psi(\gamma_x(s))=s$.
The domain $\overline\Omega$ is then filled with the curves $\gamma_x([0,1])$, $x\in\Gamma_0$. The deformation $\Phi$ may be defined along these curves: if $y\in\overline\Omega$,$y=\gamma_x(\tau)$ for some $x\in\Gamma_0$ and $\tau\in[0,1]$, then we define
$$
\Phi(y,s)=\gamma_x(\tau-s\tau),\,s\in[0,1].
$$
For our results the simplest admissible domain is a domain which is diffeomorphic to an annulus $\overline {B_2(0)\setminus B_1(0)}$. However, one may imagine other domains, for example the domain in $\R^3$ limited by two coaxial tori.

This geometry of the domains, that we call of annulus type,  will be essential for our Carleman observability estimates with boundary observations.
\end{remark}

\subsubsection*{\textbf{\rm (H2) }Operators and boundary conditions}
\begin{itemize}
\item[-]  $a_i^{jk}\in W^{1,\infty}(\Omega) $, $b_i^k\in L^\infty(\Omega)$ and $a_i^{jk}$
 satisfy the ellipticity condition: $$\exists \mu>0 \text{ \textit{s.t.} }\sum_{j,k=1}^N a_i^{jk}(x)\xi_j\xi_k\geq\mu|\xi|^2,\quad\forall\xi\in\R^N,\quad(t,x)\in Q,  i=\overline{1,n},$$
\item[-] For $i=\overline{1,n}$ the coefficients $\beta_i\in C(\partial\Omega,\{0,1\})$, $\eta_i\in L^\infty(\partial\Omega), \,  \eta_i\geq 0 \text{ and }\beta_i+\eta_i>0$ in $\partial\Omega$;
\item[-] Denoting by $A_i=A_i(x)=(a_i^{jk}(x))_{jk}$ the matrix valued functions constructed with the coefficients in the principal part and by $\nu$ the exterior normal vector field to the boundary $\partial\Omega$, the conormal derivative of a function $z\in H^1(\Omega)$ is
$$
\frac{\partial}{\partial\nu_{A_i}}z=\langle A_i(x)\nabla z(x),\nu(x)\rangle_{\R^N},\, x\in\partial\Omega.
$$
\end{itemize}

\subsubsection*{\textbf{\rm (H3) }Nonlinearity}
The coupling functions (representing reaction terms in a reaction-diffusion model) satisfy $f_i:\R\times\overline\Omega\times\R^n\longrightarrow\R$ are $C^1$ smooth, $f_i(t,x,\textbf{0})\equiv 0$ and one of the following hypotheses hold: either
\begin{itemize}
\item[-]   $f_i(t,x,y_1,\ldots,y_{i-1},0,y_{i+1},\dots, y_n)=0,  i =\overline{1,n},   t>0,x\in\overline\Omega,\, \textbf{y}\ge0;$

  or
\item[-]  $f_i(t,x,y_1,\ldots,y_{i-1},0,y_{i+1},\dots, y_n)\leq 0,  i =\overline{1,n},   t>0,x\in\overline\Omega,\,\textbf{ y}\ge0$
and
$$
\frac{\partial}{\partial y_l}f_i(t,x,\textbf{y})\leq0, i,l =\overline{1,n}, i\ne l,  t>0,x\in\overline\Omega,\, \textbf{y}\ge0.
$$
\end{itemize}
\begin{remark}
Less regularity with respect to $t,x$ may be assumed for the coupling functions $f_i$; we need essentially that  
$$
\frac{\partial f_i}{\partial y_i}(t,x, y_1(t,x), \ldots,y_n(t,x))\in L^\infty(Q),\,\forall y_1,\ldots,y_n\in L^\infty(Q).
$$
\end{remark}
\subsubsection*{\textbf{\rm (H4) }Observation operators}
We consider the boundary observation  operator: 
$$
\zeta(\textbf{y})=(\zeta_i(\textbf{y}))_i,
$$
$$\zeta_i:[H^1(\Omega)]^n\rightarrow L^2(\Gamma_1),\, \zeta_i=\zeta_i(\textbf{y})=\gamma_i(x)\frac{\partial y_i}{\partial \nu_{A_i}}+\delta_i(x)y_i,  x\in\Gamma_1, \textbf{y}=(y_1,\ldots,y_n).$$
 with $\gamma_i,\delta_i\in L^\infty(\partial\Omega)$, $i=\overline{1,n}$. We assume the following compatibilty condition between the coefficients appearing in the boundary conditions and the coefficients appearing in the observation operators: for some $\epsilon>0$
 $$
 \det \left(\begin{matrix}
 	\gamma_i&\delta_i\\
 	\beta_i& \eta_i
 \end{matrix}\right)=\gamma_i\eta_i-\beta_i\delta_i\ge\epsilon>0 \text{ in } \Gamma_1.
 $$

 \subsubsection*{\textbf{\rm (H5) }Classes of sources and admissible solutions}
 The following classes  of sources are considered in the paper: for   $k>0 $,
 \begin{equation}\label{ch5GsetD1}
 \mathcal{G}_{k}=\left\lbrace \textbf{g}\in L^2(0,T;[L^2(\Omega)]^n):\,\textbf{g}\ge 0 \text{ s.t. } \|\textbf{g}\|_{L^2(Q)}\leq k \|\textbf{g}\|_{L^1(Q)}\right\rbrace
 \end{equation}
As the parabolic systems we consider model reaction-diffusion processes we are working in the following classes of solutions: for $M>0$ let

\begin{equation}\label{ch5Fset}
\mathcal{F}_{M}=\lbrace\textbf{ y}\in [W^{2,1}(Q)]^n\cap [L^\infty(Q)]^n:\textbf{ y}\geq 0, \, \|\textbf{y}\|_{[L^\infty(Q)]^n}\leq M \rbrace .
\end{equation}
  where $W^{2,1}(Q)=L^2(0,T;H^2(\Omega))\cap W^{1,2}(0,T;L^2(\Omega))$

  The main result in the paper giving boundary $L^2$ source stability estimates  is:
  \begin{theorem}\label{thnonlinear}
   	
  	Let $ M,k>0$, and assume that  the sources in \eqref{rd-sys} belong to  $\mathcal{G}_{k}$ and the associated solutions  $\textbf{y}\in\mathcal{F}_{M}$.  Under hypotheses {\rm (H1)-(H5)} there exists  $C=C(\Omega,M,k)>0$ such that 
  	\begin{equation}\label{estgLq2}
  		\begin{aligned}
  			&\left\|\textbf{g}\right\|_{L^2(Q)}\leq C\|\zeta(\textbf{y})\|_{L^2(\Sigma_1)}.
  		\end{aligned}
  	\end{equation}
  \end{theorem}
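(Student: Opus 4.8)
The plan is to combine the two tools announced in the introduction: the boundary Carleman estimate adapted to the annular geometry (H1) and a positivity-improving unique continuation property for the cooperative system, and then to convert the resulting qualitative uniqueness into the quantitative bound \eqref{estgLq2} by a compactness argument. I would first linearize and reduce to an $L^1$ bound. Since every admissible solution lies in $\mathcal{F}_M$ and each $f_i$ is $C^1$ with $f_i(t,x,\textbf{0})\equiv 0$, I write $f_i(t,x,\textbf{y})=\sum_{l=1}^n c_{il}(t,x)\,y_l$ with $c_{il}(t,x)=\int_0^1\partial_{y_l}f_i(t,x,\tau\textbf{y})\,d\tau\in L^\infty(Q)$ and $\|c_{il}\|_{L^\infty}\le L(M)$; the sign hypotheses in (H3) turn into the cooperativity conditions $c_{il}\le 0$ for $i\ne l$ together with the boundary-sign structure that makes the positive cone invariant. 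So each $y_i$ solves a linear parabolic equation with $L^\infty$-coefficients bounded in terms of $M$, and it suffices to prove \eqref{estgLq2} uniformly for this linear class. Moreover, since $\textbf{g}\ge 0$ and $\|\textbf{g}\|_{L^2(Q)}\le k\|\textbf{g}\|_{L^1(Q)}$ by the definition of $\mathcal{G}_k$, it is enough to establish $\|\textbf{g}\|_{L^1(Q)}\le C\|\zeta(\textbf{y})\|_{L^2(\Sigma_1)}$.

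Next I would invoke the Carleman estimate. Building weights $\varphi,\alpha$ from $\psi_0$, the hypothesis $|\nabla\psi_0|\ne 0$ throughout $\overline\Omega$ means there are no interior critical points, so the pseudoconvexity computation places all of the observation on $\Gamma_1$, where $\psi_0$ attains its maximum $k_1$ and $\partial_\nu\psi_0>0$. Applying it componentwise and absorbing the first- and zero-order terms $b_i^k D_k y_i$ and $c_{il}y_l$ into the left-hand side for $s,\lambda$ large yields, schematically,
\[
\int_Q\left(s^{3}\lambda^{4}\varphi^{3}|y_i|^{2}+s\lambda^{2}\varphi|\nabla y_i|^{2}\right)e^{2s\alpha}\,dx\,dt\le C\int_Q|g_i|^{2}e^{2s\alpha}\,dx\,dt+C\int_{\Sigma_1}\left(|y_i|^{2}+\left|\frac{\partial y_i}{\partial\nu_{A_i}}\right|^{2}\right)e^{2s\alpha}\,d\sigma.
\]
The compatibility condition (H4), $\gamma_i\eta_i-\beta_i\delta_i\ge\epsilon>0$, is then used to solve the $2\times 2$ system formed by $\zeta_i$ and the homogeneous boundary condition for the pair $(y_i,\partial y_i/\partial\nu_{A_i})$ on $\Gamma_1$, bounding both traces pointwise by $C|\zeta_i(\textbf{y})|$, so the boundary term is controlled by $C\|\zeta(\textbf{y})\|_{L^2(\Sigma_1)}^2$.

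The crux is the positivity-improving unique continuation statement: if $\textbf{y}\ge 0$ solves the system with $\textbf{g}\ge 0$ and $\zeta(\textbf{y})\equiv 0$ on $\Sigma_1$, then $\textbf{g}\equiv 0$. By the compatibility condition, $\zeta(\textbf{y})=0$ forces both Cauchy traces $y_i$ and $\partial y_i/\partial\nu_{A_i}$ to vanish on $\Gamma_1\times(0,T)$. If $\textbf{g}\not\equiv 0$, the strong maximum principle for the cooperative system (the sign conditions guaranteeing invariance of the positive cone and positivity improving) together with the infinite propagation speed of the parabolic flow forces $y_i>0$ in the interior for later times, in particular arbitrarily close to $\Gamma_1$; the Hopf boundary lemma then makes the conormal derivative strictly signed wherever $y_i$ vanishes on $\Gamma_1$, contradicting $\partial y_i/\partial\nu_{A_i}=0$. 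The Carleman estimate is what makes this rigorous for the coupled parabolic system across the whole annulus, propagating the vanishing from $\Gamma_1$ toward $\Gamma_0$ along the critical-point-free level sets of $\psi_0$.

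Finally I would obtain the quantitative constant by compactness–uniqueness. Suppose \eqref{estgLq2} fails: there are $\textbf{g}^{(m)}\in\mathcal{G}_k$, $\textbf{y}^{(m)}\in\mathcal{F}_M$ with $\|\textbf{g}^{(m)}\|_{L^2(Q)}=1$ and $\|\zeta(\textbf{y}^{(m)})\|_{L^2(\Sigma_1)}\to 0$. Reverse Hölder keeps $\textbf{g}^{(m)}$ bounded in $L^2$, so up to a subsequence $\textbf{g}^{(m)}\rightharpoonup\textbf{g}$ weakly in $L^2(Q)$ with $\textbf{g}\ge 0$ and $\|\textbf{g}\|_{L^1(Q)}=\lim\|\textbf{g}^{(m)}\|_{L^1(Q)}\ge 1/k>0$; parabolic regularity bounds $\textbf{y}^{(m)}$ in $W^{2,1}(Q)$, giving strong and a.e.\ convergence $\textbf{y}^{(m)}\to\textbf{y}$ with convergent traces and $\zeta(\textbf{y})=0$. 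The a.e.\ convergence and continuity of $f_i$ pass the nonlinearity to the limit, so $\textbf{y}\ge 0$ solves the system with source $\textbf{g}\ge 0$, and the unique continuation step forces $\textbf{g}\equiv 0$, contradicting $\|\textbf{g}\|_{L^1(Q)}\ge 1/k$; tracking the dependencies shows $C$ depends only on $\Omega$, $M$ and $k$. I expect the main obstacle to be precisely this positivity-improving unique continuation for the coupled system under a general Dirichlet/Neumann/Robin mixture and merely $L^\infty$ cooperative coupling — establishing instantaneous strict positivity up to $\Gamma_1$ and a usable Hopf lemma simultaneously for all components — together with ensuring the compactness step is uniform (equi-integrability of the sources through reverse Hölder, strong and a.e.\ convergence of the solutions through parabolic compactness) so that no source mass is lost in the limit.
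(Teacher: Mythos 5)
Your overall architecture coincides with the paper's: linearize the nonlinearity via $c^{\textbf{y}}_{il}(t,x)=\int_0^1\partial_{y_l}f_i(t,x,\tau\textbf{y})\,d\tau$ to reduce to the linear cooperative system, use the reverse H\"older condition in $\mathcal{G}_k$ to guarantee that no $L^1$ mass is lost in the weak limit, use the Carleman estimate to get the a priori bounds needed for compactness, and conclude by a compactness--uniqueness contradiction resting on a positivity-improving unique continuation property. Two points, however, deviate from what can actually be carried out. First, the global bound of $\textbf{y}^{(m)}$ in $W^{2,1}(Q)$ is not available: there is no control on the initial data in $H^1$, so parabolic regularity only yields local-in-time bounds; the paper works throughout on $(\epsilon,T-\epsilon)$ and applies Aubin--Lions there, which suffices because the Carleman weights degenerate at $t=0,T$ anyway.

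The genuine gap is in the unique continuation step. You propose to combine instantaneous strict interior positivity with the Hopf boundary lemma at $\Gamma_1$ to contradict $\partial y_i/\partial\nu_{A_i}=0$. But the limit object $\textbf{y}$ is only a variational solution with $L^\infty$ zero-order coefficients, and the positivity-improving property available in this framework (irreducibility plus analyticity of the semigroup, Theorem \ref{th-pozimpr}) yields $y_i(t,\cdot)>>0$, i.e.\ positivity almost everywhere --- not the pointwise strict positivity up to the boundary, nor the $C^1$ boundary regularity, that a Hopf lemma requires; you yourself flag this as the main obstacle without resolving it. The paper circumvents Hopf entirely: since the compatibility condition (H4) together with $\zeta(\textbf{y})=0$ forces \emph{both} Cauchy traces $y_i$ and $\partial y_i/\partial\nu_{A_i}$ to vanish on $\Sigma_1$, one extends $\Omega$ across $\Gamma_1$ to a slightly larger $\tilde\Omega$ and checks that the zero extension $\chi_\Omega\textbf{y}$ is a variational solution on $\tilde\Omega$ of a system with extended cooperative coefficients and source $\chi_\Omega\textbf{g}$. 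This extended solution vanishes identically on the open set $\tilde\Omega\setminus\Omega$ at every time, hence on a set of positive measure, and the measure-theoretic strong maximum principle (Theorem \ref{strmaxsys}\,ii)) then forces $\textbf{g}\equiv0$, contradicting $\|\textbf{g}\|_{L^1(Q)}\ge 1/k$. Note also that in this final step the Carleman estimate plays no role in ``propagating the vanishing from $\Gamma_1$ toward $\Gamma_0$'' as you suggest; its only job in the contradiction argument is to supply the $L^2_{loc}(0,T;H^1(\Omega))$ bound on $\textbf{y}^{(m)}$ from the boundedness of the sources and observations, which is what makes the compactness step run.
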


  The approach for obtaining source estimates for nonlinear systems is passing through the study of the following  linear problem:
   \begin{equation}\label{sysinitial}
   \left\lbrace
   \begin{array}{ll}
   D_ty_i-\sum\limits_{j,k=1}^N D_j(a_i^{jk} D_ky_i)+\sum\limits_{{\substack{k=\overline{1,N}}}}b_i^{k} (x)D_ky_i+\sum\limits_{l=\overline{1,n}}c_{il}(t,x) y_l=g_i, &(0,T)\times\Omega,
   \\
   \beta_i(x)\frac{\partial y_i}{\partial \nu_{A_i}}+\eta_i(x)y_i=0 ,  &(0,T)\times\partial\Omega, 
   \end{array}
   \right.\end{equation} 
   under the following hypotheses:
   \begin{itemize}
   \item[-] The coefficients in the principal part $a_i^{jk}$ and the coefficients in the boundary conditions $\beta_i, \eta_i$ are as in (H2);
   \item[-] The lower-order operators are given by:
     \begin{equation}\label{secpart} L^1_iw=\sum\limits_{{\substack{k=\overline{1,N}}}}b_i^{k} (x)D_kw,\quad L^0_i(t)\textbf{y}=\sum\limits_{l=\overline{1,n}}c_{il}(t,x) y_l, \quad 
     \end{equation}
     with  coefficients  $\textbf{b}_i=(b_i^1,\ldots,b_i^N)^\top,b^{k}_i\in L^\infty(\Omega), c_{il}\in L^\infty(Q), i,l=\overline{1,n}, k=\overline{1,N}$. (Here $w$   denotes some scalar function while $y$ is a vector valued function) The system is coupled in the zero order terms through the operators $L^0_i.$
   \end{itemize}
   
   The inverse source estimates with boundary observations for the linear case is given in the following theorem:
  
   \begin{theorem}\label{thlinear}
Assume that  the sources $\textbf{g}=(g_i)_i, g_i\ge0, g_i\in L^2(0,T;L^2(\Omega)), i=\overline{1,n}$  belong to  $\mathcal{G}_k$ for some $k>0$ and
the coupling  coeffiecients $c_{il}\in L^\infty(Q)$ satisfy the sign condition  
$$c_{il}\leq 0,  \quad i=\overline{1,n}, i\neq l.$$
 Then, the sources $\textbf{g}\in\mathcal{G}_{k}$ and  corresponding variational solutions $y$ to \eqref{sysinitial} satisfy the source estimate with boundary observation  
	 \begin{equation}\label{estginflin}
	\left\|\textbf{g}\right\|_{L^2(Q)}\leq C\|\zeta(\textbf{y})\|_{L^2(\Sigma_1)},
	\end{equation} 
	for some constant $C=C(k,\|c_{il}\|_{L^\infty(Q)})>0$.
\end{theorem}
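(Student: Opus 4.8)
The plan is to deduce the $L^2$ estimate \eqref{estginflin} from a bound on the $L^1$ norm of the source, and to produce the latter by combining a global Carleman estimate carrying a boundary observation on $\Sigma_1=(0,T)\times\Gamma_1$ with the positivity of $\textbf{g}$ and $\textbf{y}$. First I would exploit the very definition of the class: since $\textbf{g}\ge0$ and $\textbf{g}\in\mathcal{G}_k$, \eqref{ch5GsetD1} gives $\|\textbf{g}\|_{L^2(Q)}\le k\|\textbf{g}\|_{L^1(Q)}=k\int_Q\sum_{i=1}^n g_i$, so it suffices to dominate the single number $\int_Q\sum_i g_i$ by $C\|\zeta(\textbf{y})\|_{L^2(\Sigma_1)}$. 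Working with this time-global quantity is convenient because it is insensitive to the degeneracy of the Carleman weight at the endpoints $t=0,T$; the reverse-Hölder inequality built into $\mathcal{G}_k$ is exactly what will later allow the (much weaker) $L^1$ control of a possibly concentrated source to be upgraded to $L^2$.

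Second, I would record the sign information. Because $c_{il}\le0$ for $i\ne l$, the system \eqref{sysinitial} is cooperative, so for a nonnegative source (with homogeneous initial condition) a strong maximum / positivity-improving principle yields $\textbf{y}\ge0$ in $Q$, and moreover fixes the sign of the conormal fluxes on each boundary component: on $\Gamma_0$, where $\psi_0$ attains its minimum, \eqref{psi0} forces $\partial_\nu\psi_0<0$, while on $\Gamma_1$, the maximum, $\partial_\nu\psi_0>0$. This annular geometry is what makes a Carleman estimate with observation confined to $\Gamma_1$ possible: in the integrations by parts all boundary integrals on the unobserved component $\Gamma_0$ acquire the favorable sign and may be discarded, whereas the integrals surviving on $\Gamma_1$ recombine—through the determinant compatibility condition of (H4)—into the observed quantity $\|\zeta(\textbf{y})\|_{L^2(\Sigma_1)}$.

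Third, the quantitative engine is a Carleman estimate for \eqref{sysinitial}, with weights $\varphi,\alpha$ constructed from $\psi_0$, of the schematic form
\[
\sum_{i=1}^{n}\int_Q\big(s^3\lambda^4\varphi^3|y_i|^2+s\lambda^2\varphi|\nabla y_i|^2\big)e^{2s\alpha}\,dx\,dt\le C\sum_{i=1}^n\int_Q e^{2s\alpha}|g_i|^2\,dx\,dt+C\|\zeta(\textbf{y})\|_{L^2(\Sigma_1)}^2,
\]
where the coupling terms $\sum_l c_{il}y_l$ have been absorbed into the left-hand side for $s,\lambda$ large, using $\|c_{il}\|_{L^\infty(Q)}$ and the sign condition. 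To turn this into a bound on the source I would integrate each equation of \eqref{sysinitial} over $Q$ against multipliers adapted to the weight $e^{2s\alpha}$: positivity of $g_i,y_i$ together with the boundary conditions of (H2) and $c_{il}\le0$ reduce $\int_Q\sum_i g_i$ to the observation on $\Gamma_1$ plus an interior term of the form $\int_Q\textbf{y}\,(\text{benign weight})$. By Cauchy–Schwarz this interior term is controlled by the Carleman left-hand side times a factor that is small for $s,\lambda$ large (the weight $s^3\lambda^4\varphi^3$ sitting in the denominator), and the Carleman right-hand side $\int_Q e^{2s\alpha}|g_i|^2\le\|\textbf{g}\|_{L^2(Q)}^2\le k^2\|\textbf{g}\|_{L^1(Q)}^2$ is re-expressed, via $\mathcal{G}_k$, in terms of $\|\textbf{g}\|_{L^1(Q)}$; choosing the parameters large then absorbs this contribution into $\int_Q\sum_i g_i$ and leaves $\int_Q\sum_i g_i\le C\|\zeta(\textbf{y})\|_{L^2(\Sigma_1)}$, whence \eqref{estginflin}.

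The main difficulty I anticipate is the bookkeeping in the closing step: one must choose the multipliers and the Carleman weight so that, simultaneously, (i) the quantity bounded from below is genuinely $\int_Q\sum_i g_i$ despite the vanishing of $e^{2s\alpha}$ at $t=0,T$, (ii) the fluxes on the unobserved boundary $\Gamma_0$ and the zero-order coupling terms appear with absorbable or favorable signs for the full coupled system—this is where the cooperativity $c_{il}\le0$ and the positivity-improving property are indispensable—and (iii) the residual source term produced by Cauchy–Schwarz is small enough to be absorbed only after invoking the reverse-Hölder property of $\mathcal{G}_k$. Reconciling the degeneracy of the weight with the need to bound the global integral of the source, uniformly over the cooperative system, is the technical heart of the argument.
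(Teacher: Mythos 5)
Your overall strategy is genuinely different from the paper's, and it contains a gap at the decisive step. The paper does not run a direct absorption argument: it argues by contradiction, normalizing $\|\textbf{g}^{(m)}\|_{L^2(Q)}=1$ with $\|\zeta(\textbf{y}^{(m)})\|_{L^2(\Sigma_1)}\to0$, extracts limits via the compactness Lemma \ref{convergente} (where the Carleman estimate of Proposition \ref{lemaCarlemanclassic} is used only to get a priori bounds in $L^2_{loc}(0,T;H^1(\Omega))$, combined with Aubin--Lions), shows the limit source is nonzero using exactly the reverse H\"older inequality of $\mathcal{G}_k$ that you invoke, and then derives a contradiction by extending the limit solution by zero across $\Gamma_1$ into an enlarged domain $\tilde\Omega$ and applying the positivity-improving/unique continuation result of Theorem \ref{strmaxsys}. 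The constant $C$ is therefore obtained non-constructively; no chain of inequalities produces it directly.

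The gap in your plan is precisely the point you flag as ``the technical heart'': the degeneracy of the Carleman weight at $t=0,T$ is not a bookkeeping issue but an obstruction that defeats the absorption scheme. When you integrate each equation of \eqref{sysinitial} over $Q$ to extract $\int_Q\sum_i g_i$, you are left with terms such as $\int_\Omega y_i(T)\,dx$, $\int_Q\langle \textbf{b}_i,\nabla y_i\rangle\,dx\,dt$ and $\int_Q|c_{il}|\,y_l\,dx\,dt$ over the \emph{full} time interval. Controlling these by the Carleman left-hand side via weighted Cauchy--Schwarz requires the finiteness of integrals like $\int_Q s^{-3}\lambda^{-4}\varphi^{-3}e^{-2s\alpha}\,dx\,dt$; since $\alpha<0$ and $e^{-2s\alpha}$ blows up like $e^{cs/t(T-t)}$ as $t\to0^+$ and $t\to T^-$, that integral is infinite, so your factor $\delta(s,\lambda)$ is not small but $+\infty$, and nothing can be absorbed. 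Restricting to an interior time interval does not help, because a source in $\mathcal{G}_k$ may be supported near $t=0$ or $t=T$, and relating its full $L^1(Q)$ mass to interior values of $\textbf{y}$ would require a quantitative lower bound on the positivity-improving action of the flow --- which is exactly what is not available and what the compactness argument is designed to circumvent. Relatedly, although you correctly call the positivity-improving property ``indispensable,'' it has no actual entry point in a direct chain of inequalities; in the paper it enters only as a qualitative unique continuation statement applied to the limit problem (the extended solution vanishes identically on $(0,T)\times(\tilde\Omega\setminus\Omega)$ while its source is nonnegative and nontrivial, contradicting Theorem \ref{strmaxsys}). To repair your argument you would either have to prove a Carleman estimate nondegenerate up to $t=0,T$ (which the chosen weights do not give) or switch to the contradiction/compactness scheme.
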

Before proving Theorem \ref{thlinear} and Theorem \ref{thnonlinear} in \S\ref{sec:stability}, we study first the main tools we need in our approach.
First, in \S\ref{sec:var} we present the variational framework for initial-boundary value problems associated to parabolic equations or systems and focus on the posivity and positivity improving effects of the parabolic flows.
Then, in \S\ref{sec:Carleman} we prove a special Carleman estimate for parabolic problems with homogeneous boundary conditions and with boundary observations involving conormal derivatives and/or trace of the solution, without need of either tangential component of the gradient of the solution or time derivative of the solution.

 \section{Variational solutions to parabolic equations and systems. Positivity of solutions and positivity improving properties.}
 \label{sec:var}

 \subsubsection*{Abstract parabolic problems.}

 Let $V,H$ be two Hilbert spaces, $V\subset H$ with continuous and dense embedding. This defines the Gelfand triple  $$V\subset H \subset V^*,$$
 where
  $V^*$ is the topological dual of $V$ with pivot space $H$. Denoting by $(\cdot,\cdot)_{V,V^*}$ the duality between $V$ and $V^*$ one has $(v,h)_{V,V^*}=\langle v,h\rangle_H$ when $v\in V$ and $h\in H$.

  Consider a bilinear form 
 $$a:V\times V\longrightarrow\mathbb{R}$$
 with the following properties:

 \begin{itemize}
 \item[\textit{i)}] There exists $M>0$ such that $|a(u,v)|\le M\|u\|_V\|v\|_V,\,\forall u,v\in V$;
 \item[\textit{ii)}] There exists $\mu,\alpha>0$ such that $a(u,u)+\mu\|u\|_H^2\ge \alpha\|u\|_V^2,\,\forall u\in V.$
 \end{itemize}
  One may associate  to the bilinear form $a$ an operator $\A:D(\A) =V\subset V^*\rightarrow V^*$ , defined by
  $$(\A u,v)_{V^*,V}=a(u,v), \, \forall u,v\in V. $$
  Consider the abstract parabolic Cauchy problem is
\begin{equation}\label{pbparab}
 \left\{
 \begin{array}{l}
 y'(t)+\A y(t)=g(t),\,t>0\\
 y(0)=y_0.
 \end{array}
 \right.
\end{equation}
 
 For $y_0\in H$ and   $g\in L^2(0, T; V^* )$ a variational solution is a function $y\in L^2(0,T; V)\cap   C([0,T];H)$  satisfying
 \begin{equation}
 \label{varsol}
 \langle y(t),v\rangle_H-\langle y_0,v\rangle_H+\int_0^ta(y(\tau),v)d\tau=\int_0^t\langle g(\tau),v\rangle_{V',V}d\tau,\forall t\in (0,T),\forall v\in V.
 \end{equation}
 
 Existence, uniqueness and continuous dependence of the variational solution is classic (see \cite{lionsmag}).
  The theory of $C_0$ semigroups of operators  inssures  that 
 $-\A$ generates an analytic semigroup $S(t)$  in $V^*$ and the variational solution coincides with the mild solution to nonhomogeneous problem \eqref{pbparab} which is given by Duhamel's formula:
 \begin{equation}\label{Amildsol1}
 	y(t)=S(t)y_0+\int_0^tS(t-s)g(s)ds=S(t)y_0+[S*g](t).
 \end{equation}

 

 
  Let also 
  \begin{equation}\label{Aflat}
  \A^\flat:D(\A^\flat)\subset H\rightarrow H,\, D(\A^\flat)=\{z\in V|\,\A z\in H\},\, \A^\flat z=\A z\mbox{ for } z\in D(\A^\flat).
  \end{equation}
 
Then $-\A^\flat$ is the generator of a   $C_0$, analytic semigroup in $H$, also denoted by $S(t)$ which is the restriction to $H$ of the semigroup generated in $V^*$.


 One may prove that if $y_0\in H$ and $g\in L^2(0,T;H)$ the mild solution given by \eqref{Amildsol1} coincides with the variational solution satisfying \eqref{varsol}.

 \subsubsection*{Irreducibility, analyticity and positivity improving properties}
 
 We consider now that $H=L^2(\Omega)$ for some open set $\Omega\subset\R^N$ .
 
 A $C_0$-semigoup $(S(t))_{t\geq 0}$ on $L^2(\Omega)$ is called \textit{irreducible} if given a Lebesgue  measurable subset $\omega\subset\Omega$, the set  $$L^2(\omega)=\{u\in L^2(\Omega)|u=0\,a.e.\,x\in\Omega\setminus\omega\}$$ 
 is invariant under the action of $(S(t))_{t\ge0}$ if and only if either $\mu(\omega)=0$ or $\mu(\Omega\setminus\omega)=0$.
  
  For a measurable function $f:\Omega\rightarrow\R$ one uses the following notations for various properties of  positivity:
  \begin{itemize}
  	\item[-] $f\geq 0$ if $f(x)\geq 0$  \textit{a.e.} $x\in\Omega;$
  	\item[-]$f> 0$ if $f\geq 0$ and $\mu(\lbrace x\in\Omega| f(x)\neq 0\rbrace)>0;$
  	\item[-] $f>> 0$ if $f\geq 0$ and $\mu(\lbrace x\in\Omega| f(x)= 0\rbrace)=0.$
  \end{itemize}
   
   One says that the semigroup $S(t)$ is positive if for $f\in H$, $f\ge0$ one has $S(t)f\ge0,\forall t\ge0$. Positivity of the semigroup may be characterized by the Beurling-Deny Condition (one denotes by $u^+=\max(u,0)$, $u^-=u^+-u$):
   \begin{theorem}
   \label{th-BD} The semigroup $(S(t))_{t\ge0}$ is positive if and only if
   \begin{equation}
 \label{condBD}u\in V \Rightarrow u^+\in V \text{ and } a(u^+,u^-)\le0,\,\forall u\in V. 
   \end{equation} 

    \end{theorem}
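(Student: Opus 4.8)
The plan is to route everything through the resolvent family $R_\lambda=(\lambda I+\A^\flat)^{-1}$, using that positivity of the semigroup is equivalent to positivity of the resolvents for large $\lambda$. One implication comes from the Laplace representation $R_\lambda f=\int_0^\infty e^{-\lambda t}S(t)f\,dt$ and the reverse from the exponential formula $S(t)f=\lim_{n\to\infty}\left(\tfrac{n}{t}R_{n/t}\right)^n f$; since the positive cone of $H$ is closed, positivity is transmitted through the integral in one direction and through the iterates and their limit in the other. Because replacing $a$ by $a+\mu\langle\cdot,\cdot\rangle_H$ turns $S(t)$ into $e^{-\mu t}S(t)$ (same sign) and leaves $a(u^+,u^-)$ unchanged, as $\langle u^+,u^-\rangle_H=0$, I may and will assume $a$ coercive, $a(u,u)\ge\alpha\|u\|_V^2$, so that $R_\lambda$ is well defined on $H$ for every $\lambda>0$.

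For the sufficiency of \eqref{condBD}, I would take $f\ge0$, set $u=R_\lambda f\in V$, characterized by $\lambda\langle u,v\rangle_H+a(u,v)=\langle f,v\rangle_H$ for all $v\in V$, and insert $v=u^-\in V$ (allowed because \eqref{condBD} gives $u^+\in V$, hence $u^-=u^+-u\in V$). Since $\langle u^+,u^-\rangle_H=0$ one has $\langle u,u^-\rangle_H=-\|u^-\|_H^2$ and $a(u,u^-)=a(u^+,u^-)-a(u^-,u^-)$, so that
\begin{equation*}
\lambda\|u^-\|_H^2+a(u^-,u^-)=a(u^+,u^-)-\langle f,u^-\rangle_H\le 0,
\end{equation*}
the right-hand side being nonpositive by \eqref{condBD} and $f\ge0$. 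Coercivity then forces $u^-=0$, i.e. $R_\lambda f\ge0$, whence $S(t)\ge0$.

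For the necessity I would assume $R_\lambda\ge0$ and work with the Yosida-type approximation of the form,
\begin{equation*}
b_\lambda(u,v)=\lambda\langle u-\lambda R_\lambda u,v\rangle_H=a(\lambda R_\lambda u,v),\qquad u,v\in H,
\end{equation*}
the second equality following from $u-\lambda R_\lambda u=\A^\flat R_\lambda u$. Coercivity together with the identity $a(\lambda R_\lambda u,\lambda R_\lambda u)=b_\lambda(u,u)-\lambda\|u-\lambda R_\lambda u\|_H^2$ yields the uniform bound $\alpha\|\lambda R_\lambda u\|_V^2\le b_\lambda(u,u)\le (M^2/\alpha)\|u\|_V^2$, from which two facts follow: first, $\lambda R_\lambda u\rightharpoonup u$ weakly in $V$ for $u\in V$, so that $b_\lambda(u,v)\to a(u,v)$ for all $u,v\in V$; and second, $u\in V$ if and only if $\sup_{\lambda}b_\lambda(u,u)<\infty$, the weak limit in $V$ of the bounded net $\lambda R_\lambda u$ being $u$. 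To obtain $u^+\in V$ I would expand, using $u=u^+-u^-$ and $\langle u^+,u^-\rangle_H=0$,
\begin{equation*}
b_\lambda(u,u)-b_\lambda(u^+,u^+)=b_\lambda(u^-,u^-)+\lambda^2\left(\langle R_\lambda u^+,u^-\rangle_H+\langle R_\lambda u^-,u^+\rangle_H\right)\ge0,
\end{equation*}
where the first term is nonnegative by accretivity of the approximation and the last two by $R_\lambda\ge0$ with $u^\pm\ge0$. Thus $\sup_\lambda b_\lambda(u^+,u^+)\le\sup_\lambda b_\lambda(u,u)<\infty$ and $u^+\in V$. Finally, disjointness gives $b_\lambda(u^+,u^-)=-\lambda^2\langle R_\lambda u^+,u^-\rangle_H\le0$, and letting $\lambda\to\infty$—now legitimate since $u^\pm\in V$—yields $a(u^+,u^-)\le0$.

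The delicate point is the necessity direction, and within it the two structural properties of $b_\lambda$: the lattice inequality $b_\lambda(u^+,u^+)\le b_\lambda(u,u)$, which is where positivity of $R_\lambda$ enters decisively, and the identification $V=\{u\in H:\sup_\lambda b_\lambda(u,u)<\infty\}$ together with the weak convergence $\lambda R_\lambda u\rightharpoonup u$ in $V$. I expect these to be the main work; notably they do not use positivity but only the boundedness and coercivity of $a$, and I would prefer to derive both from the single uniform estimate above rather than invoke the general approximation theory of closed sectorial forms. Since none of the computations exploit symmetry of $a$, the argument applies verbatim to the non-symmetric forms arising from the first-order terms in \eqref{sysinitial}.
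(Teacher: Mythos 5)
Your proof is correct. Note, however, that the paper does not prove Theorem \ref{th-BD} at all: it is quoted as the classical Beurling--Deny criterion, with the reader referred to Arendt's lecture notes and Ouhabaz's monograph. Your argument is essentially the standard one from those sources (the invariance criterion for closed convex sets specialized to the positive cone): reduction to the coercive case, equivalence of positivity of $S(t)$ and of the resolvents via the Laplace and Euler formulas, the test-function $v=u^-$ computation for sufficiency, and for necessity the approximating forms $b_\lambda(u,v)=\lambda\langle u-\lambda R_\lambda u,v\rangle_H$ together with the identification $V=\{u\in H:\sup_\lambda b_\lambda(u,u)<\infty\}$ and the lattice inequality $b_\lambda(u^+,u^+)\le b_\lambda(u,u)$. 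All the delicate points you flag (accretivity of $b_\lambda$, weak $V$-convergence of $\lambda R_\lambda u$ to $u$, passage to the limit in $b_\lambda(u^+,u^-)\le 0$) are handled correctly, and, as you observe, nothing uses symmetry of $a$, which is what makes the criterion applicable to the forms with first-order terms arising in the paper.
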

 Positivity improving properties of a positive semigroup occur under supplementary assumptions involving irreducibility and analyticity (Theorem 3 in \cite{majrob1981} or Theorem  10.1.2 in \cite{arendt_heat}):
 \begin{theorem}
 Assume $(S(t))_{t\geq 0}$ is positive, irreducible and holomorphic semigroup on $H=L^2(\Omega)$. Then, if $f\in H$ and $f>0$ one has
 \begin{equation}\label{th-pozimpr}
 S(t)f>>0 \quad \forall t>0.
 \end{equation}
 \end{theorem}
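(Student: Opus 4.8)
The plan is to reduce the pointwise conclusion $S(t)f\gg0$ to strict positivity of scalar pairings, and then to combine irreducibility (which controls the resolvent, i.e.\ an averaged time) with holomorphy (which must propagate positivity to \emph{every} time) via the identity theorem for real-analytic functions. First I would reformulate the target. Fix $f>0$ and $t>0$. Then $S(t)f\ge0$ by positivity, and $S(t)f\neq0$ because holomorphic semigroups are injective: if $S(t_0)f=0$ then $S(\cdot)f\equiv0$ on $[t_0,\infty)$, hence, being real-analytic on $(0,\infty)$, it vanishes on all of $(0,\infty)$, forcing $f=\lim_{t\to0^+}S(t)f=0$. Thus $S(t)f>0$, and to upgrade this to $S(t)f\gg0$ it suffices to show $\langle S(t)f,\mathbf 1_E\rangle_H>0$ for every measurable $E$ with $\mu(E)>0$; equivalently, that the nonnegative number $\Phi_{f,g}(t):=\langle S(t)f,g\rangle_H$ never vanishes, for all $f,g>0$ and all $t>0$.

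Second, I would feed in irreducibility through the resolvent. For $\lambda$ to the right of the spectral bound, $R(\lambda):=(\lambda+\A^\flat)^{-1}=\int_0^\infty e^{-\lambda t}S(t)\,dt$ is a positive operator. I would use the classical characterization that, for a positive semigroup, irreducibility is equivalent to $R(\lambda)$ being positivity improving: if $h=R(\lambda)f$ and $\omega=\{h=0\}$ had positive measure, then $0=\langle h,\mathbf 1_\omega\rangle_H=\int_0^\infty e^{-\lambda t}\langle S(t)f,\mathbf 1_\omega\rangle_H\,dt$ with a nonnegative continuous integrand forces $\langle S(t)f,\mathbf 1_\omega\rangle_H=0$ for all $t\ge0$, so the whole orbit of $f$ stays in $L^2(\Omega\setminus\omega)$; one then checks that the $L^2$-closure of the ideal generated by this orbit is $S$-invariant, contradicting irreducibility. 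In particular, for $f,g>0$ we get $\langle R(\lambda)f,g\rangle_H=\int_0^\infty e^{-\lambda t}\Phi_{f,g}(t)\,dt>0$, so $\Phi_{f,g}$ is not identically zero.

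Third, I would use holomorphy to pass from ``not identically zero'' to ``nowhere zero''. Since $S$ is holomorphic, $t\mapsto S(t)f$ extends holomorphically to a sector about $(0,\infty)$, so $\Phi_{f,g}$ is real-analytic there; being also $\ge0$ and $\not\equiv0$, its zeros are isolated and of even order. To exclude an isolated zero at $t_0$ I would use the semigroup law together with irreducibility of the adjoint semigroup, which is again positive (from $\langle S(t)^*g,f\rangle_H=\langle g,S(t)f\rangle_H\ge0$), irreducible and holomorphic: at such $t_0$ one has $\langle S(t_0/2)f,\,S(t_0/2)^*g\rangle_H=0$ with both factors $\ge0$, so $u:=S(t_0/2)f$ and $v:=S(t_0/2)^*g$ have disjoint supports, while the resolvent positivity-improving property applied to $u$ governs $\langle S(s)u,v\rangle_H=\Phi_{f,g}(s+t_0)$; reinserting this into the even-order local structure of $\Phi_{f,g}$ forbids the zero. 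Once $\Phi_{f,g}(t)>0$ for all $f,g>0$ and $t>0$, the first step yields $S(t)f\gg0$. I expect the main obstacle to be exactly this last propagation: real-analyticity and nonnegativity are by themselves fully compatible with isolated even-order zeros, so irreducibility must be injected a second time through the semigroup relation, and making that interaction quantitative is the technical crux—precisely the content established in Theorem~3 of \cite{majrob1981} and Theorem~10.1.2 of \cite{arendt_heat}, which I would invoke for the delicate step.
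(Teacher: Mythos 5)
The paper does not actually prove this theorem: it is quoted from the literature, with the proof attributed to Theorem 3 of \cite{majrob1981} and Theorem 10.1.2 of \cite{arendt_heat}, so there is no in-paper argument to compare against and your attempt must be judged on its own. Your first two steps are sound: the reduction of $S(t)f\gg0$ to strict positivity of $\Phi_{f,g}(t)=\langle S(t)f,g\rangle_H$ for all $g>0$ is correct, the injectivity of holomorphic semigroups is correctly used to get $S(t)f>0$, and the equivalence of irreducibility with the resolvent being positivity improving is a standard fact whose sketch is essentially right; it yields $\Phi_{f,g}\not\equiv 0$, hence that the zeros of the real-analytic, nonnegative function $\Phi_{f,g}$ are isolated and of even order.

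The gap is exactly where you locate it, and your proposed repair does not close it. At a putative isolated zero $t_0$ you observe that $u=S(t_0/2)f$ and $v=S(t_0/2)^*g$ are nonnegative with disjoint supports, and that $\langle R(\lambda)u,v\rangle_H>0$ by irreducibility; but since $\langle R(\lambda)u,v\rangle_H=\int_0^\infty e^{-\lambda s}\Phi_{f,g}(t_0+s)\,ds$, this only re-establishes that $\Phi_{f,g}\not\equiv0$ on $(t_0,\infty)$, which you already know, and it is perfectly consistent with $\Phi_{f,g}$ vanishing to even order at $t_0$ (a nonnegative real-analytic function such as $(t-t_0)^2$ does exactly this). ``Reinserting this into the even-order local structure'' is therefore not an argument, and you then explicitly invoke Theorem 3 of \cite{majrob1981} and Theorem 10.1.2 of \cite{arendt_heat} for ``the delicate step'' --- but that delicate step \emph{is} the content of the theorem, so the proof as written is circular. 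Either cite the result outright, as the paper does, or supply the genuinely new input needed to exclude the isolated zero; the disjoint-supports observation and the resolvent inequality alone do not suffice.
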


  \subsubsection*{Positivity for solutions to scalar parabolic problems}

  Consider $\Omega\subset\R^N$ an open, connected, bounded domain with smooth boundary and the parabolic initial boundary value problem
  \begin{equation}\label{pbparabmax}
     \begin{cases}
     D_ty+ {L}y=g  &\text{ in }Q\\
     \displaystyle\beta(x)\frac{\partial y}{\partial \nu_{A}}+\eta(x)y=0  &\text{ on }(0,T)\times\partial\Omega\\
     y(0,x)=y_0(x) &\text{ in }\Omega,
     \end{cases}
     \end{equation} where 
     
     \begin{itemize}
     \item[\textit{i)}] The elliptic part is  \begin{equation}\label{opparabdiv2}
        Lu=-\sum_{j,k=1}^N D_j(a^{jk}(x) D_ku)+\sum\limits_{k=1}^Nb^{k}(x) D_ku+c(t,x)u,
        \end{equation}
        with coefficient functions in the operator and in the boundary conditions  satisfying:
        \begin{itemize}
           \item  $a^{jk}\in W^{1,\infty}(\Omega) $  
            satisfy the ellipticity condition: $$\exists \mu>0 \text{ \textit{s.t.} }\sum_{j,k=1}^N a^{jk}(x)\xi_j\xi_k\geq\mu|\xi|^2,\quad\forall\xi\in\R^N,\quad(t,x)\in Q;$$
           \item $b=(b_1,\ldots,b_N)^\top, b^k\in L^\infty(\Omega)$  and $c\in L^\infty(Q)$;
           \item The coefficients in the boundary conditions satisfy $\beta\in C(\partial\Omega,\{0,1\})$, $\eta\in C^1(\partial\Omega)$, $\eta\geq 0$  and $\beta+\eta>0$ in $\partial\Omega$;
           
           \item Denoting by $A=A(x)=(a^{jk}(x))_{jk}$ the matrix valued functions constructed with the coefficients in the principal part, $
                      \frac{\partial}{\partial\nu_{A}}$  the corresponding conormal derivative.
           \end{itemize} 
           \item[\textit{ii)}] The source $g\in  L^2(0,T;L^2(\Omega))\simeq L^2(Q)$  and the initial data $y_0\in L^2(\Omega)$.
     \end{itemize}
  Denote by
  \begin{eqnarray}
  \Gamma_D=\{x\in\partial\Omega|\, \beta(x)=0\},\, \Gamma_{R}=\partial\Omega\setminus\Gamma_D,\nonumber\\
  H=L^2(\Omega),\, V=\lbrace v\in H^1(\Omega); v|_{\Gamma_D}=0\rbrace.\label{Vspace}
  \end{eqnarray}
       Consider the corresponding Gelfand triple
   $$V\subset H=L^2(\Omega)\subset V^*,$$
   and let    $$\tilde a:\R_+\times V\times V\longrightarrow\mathbb{R}, \tilde a(t,u,v)=a(u,v)+\int_\Omega c(t,x)uvdx$$
   with
   \begin{equation}\label{a-form}
   a(u,v)=\int_\Omega\langle A(x)\nabla u, \nabla v\rangle+\langle b, \nabla u\rangle v dx+\int_{\Gamma_R}\eta(x) u v d\sigma.
   \end{equation}   
   For $y_0\in H$ and   $g\in L^2(0, T; V^*)$ a variational solution is a function $y\in L^2(0,T; V)\cap   C([0,T];H)$  satisfying
   \begin{equation}
   \label{varsol1}
   \langle y(t),v\rangle_H-\langle y_0,v\rangle_H+\int_0^t\tilde a(\tau, y(\tau),v)d\tau=\int_0^t\langle g(\tau),v\rangle_{V^* ,V}d\tau,\forall t\in (0,T),\forall v\in V.
   \end{equation}
   
   \begin{remark}\label{rem_H2}
   Existence, uniqueness and continuous dependence on data of the variational solution for this non-autonomous problem is classical (see \cite{lionsmag}). 
   
   Denote, as before $\A:V\rightarrow V^*$ the operator associated to the form $a$ given in \eqref{a-form}. Consider also the associated operator $\A^\flat$ defined in \eqref{Aflat}. The regularity theory for general elliptic boundary value problems (see \cite{necas}, Theorems 1.1, 2.2) ensure that $D(\A^\flat)\subset H^2(\Omega)$.
   
   Also, $\A$ and $\A^\flat$ generate analytic semigroups in $V^*$ and respectively in $H$ and one may study existence and properties of solutions to nonautonomous  \eqref{pbparabmax} by treating it as a pertubed autonomous problem. 
   Positivity properties of the solutions for the nonautonomous case is considered in \cite{aredierouh2013} but,  as  we are interested in positivity improving properties, which are not a consequence of the results of the cited paper, in establishing the following result, which is a kind of strong maximum principle, we take this approach of analysing it as a perturbed autonomous system.    
   \end{remark}

   \begin{theorem}\label{strongmaxweaksol}
   	Assume that $y$ is variational solution to \eqref{pbparabmax} corresponding to positive initial data $y_0\ge0$ and positive source $g\ge0$. Then 
   	\begin{itemize}
   		\item $y(t,x)\ge0$ in $Q$;
   		\item If for some $\bar t>0$  the variational solution $y$ vanishes on a set of nonzero measure, \textit{i.e.}
   		\begin{equation}\label{vanish}
   			\mu\{x\in\Omega:y(\bar t,x)=0\}>0,
   		\end{equation}
   		then $y(t,x)=0$ and $g(t,x)=0$ for $0\le t<\bar t$ \textit{a.e.} $x\in\Omega$.
   	\end{itemize}
   \end{theorem}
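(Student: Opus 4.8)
The plan is to follow the perturbed-autonomous strategy announced in Remark \ref{rem_H2}: reduce everything to the autonomous semigroup $S(t)$ generated by $-\A^\flat$ for the time-independent form $a$ of \eqref{a-form}, and exploit its positivity and positivity improving properties through Duhamel's formula \eqref{Amildsol1}. First I would show that $S(t)$ is positive by verifying the Beurling-Deny condition \eqref{condBD} of Theorem \ref{th-BD}. Since $V=\{v\in H^1(\Omega):v|_{\Gamma_D}=0\}$, the truncation $u\mapsto u^+$ preserves $V$ (because $u^+\in H^1(\Omega)$ and $(u|_{\Gamma_D})^+=0$), and $a(u^+,u^-)=0$: the gradients $\nabla u^+,\nabla u^-$ have disjoint supports $\{u>0\},\{u<0\}$, which annihilates both the principal term $\langle A\nabla u^+,\nabla u^-\rangle$ and the drift term $\langle b,\nabla u^+\rangle u^-$, while $u^+u^-\equiv 0$ kills the boundary integral. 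To neutralize the sign-indefinite coefficient $c$, I would fix $\lambda\ge\|c\|_{L^\infty(Q)}$ and work with the still positive, still analytic semigroup $S_\lambda(t)=e^{-\lambda t}S(t)$ generated by $-(\A^\flat+\lambda I)$, rewriting \eqref{pbparabmax} as $y'+(\A^\flat+\lambda)y=g+\tilde c\,y$ with $\tilde c(t,x)=\lambda-c(t,x)\ge0$.

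For the first assertion Duhamel gives $y(t)=S_\lambda(t)y_0+\int_0^tS_\lambda(t-s)\big(g(s)+\tilde c(s)y(s)\big)\,ds$, which I would solve by Picard iteration starting from $y^{(0)}(t)=S_\lambda(t)y_0+\int_0^tS_\lambda(t-s)g(s)\,ds$; since $y_0,g\ge0$, $S_\lambda$ is positivity preserving and $\tilde c\ge0$, every iterate is nonnegative and the scheme converges in $C([0,T];H)$ to the unique variational solution, so $y\ge0$.

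For the strong maximum principle I would combine irreducibility of $S(t)$ — the standard irreducibility of a second-order elliptic operator on the connected domain $\Omega$ — with the positivity improving property \eqref{th-pozimpr}. Setting $E=\{x:y(\bar t,x)=0\}$ with $\mu(E)>0$ and evaluating Duhamel at $\bar t$, the two nonnegative contributions must both vanish on $E$: from $S_\lambda(\bar t)y_0=0$ on $E$, property \eqref{th-pozimpr} shows $y_0>0$ is impossible (it would give $S_\lambda(\bar t)y_0>>0$), so $y_0=0$; and from $\int_0^{\bar t}S_\lambda(\bar t-s)\big(g(s)+\tilde c(s)y(s)\big)\,ds=0$ on $E$ together with nonnegativity of the integrand, $S_\lambda(\bar t-s)\big(g(s)+\tilde c(s)y(s)\big)=0$ on $E$ for a.e. $s\in(0,\bar t)$, and a second use of \eqref{th-pozimpr} forces $g(s)+\tilde c(s)y(s)=0$ a.e.; since both summands are nonnegative, $g(s)=0$ and $\tilde c(s)y(s)=0$ for a.e. $s<\bar t$. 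Substituting $y_0=0$, $g=0$, $\tilde c\,y=0$ back into Duhamel gives $y(t)=\int_0^tS_\lambda(t-s)\tilde c(s)y(s)\,ds=0$ for $t\le\bar t$, which yields both conclusions. The main obstacle is precisely the sign-indefinite, time-dependent $c$: positivity improving cannot be applied to the full non-autonomous generator, so the shift by $\lambda$ and the reading of $\tilde c\,y$ as a positive internal source are essential, and the delicate steps are the pointwise-in-$s$ vanishing extracted from the integral identity and the irreducibility underlying \eqref{th-pozimpr}.
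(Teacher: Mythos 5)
Your proposal is correct and follows essentially the same route as the paper: rewrite \eqref{pbparabmax} as a perturbed autonomous problem, establish positivity of $S(t)$ via the Beurling--Deny criterion, combine irreducibility and analyticity to get the positivity improving property, obtain $y\ge 0$ by an iteration/fixed-point argument preserving the positive cone, and then exploit Duhamel's formula for the unique continuation statement. The only (minor) divergence is in the second bullet: the paper restarts the evolution at an arbitrary intermediate time $t_0<\bar t$ and applies positivity improving to $S(\bar t-t_0)y(t_0)$ to conclude $y(t_0)=0$, whereas you apply it separately to the initial-data and source contributions at time $\bar t$, extracting $g=0$ directly and then $y\equiv 0$ by substitution --- both are valid and rest on the same tools.
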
 
   \proof
 We write the problem  \eqref{pbparabmax}  in the abstract form
\begin{equation}
\label{eq-perturb}
\left\{
\begin{array}{l}
y'(t)+\A y(t)=g(t)+T(t)y(t),\,t>0\\
y(0)=y_0,
\end{array}
\right.
\end{equation}
where 
$$
T(t)\in L(H), [T(t)u](x)=-c(t,x)u(x),\, t>0, u\in H, \, x\in\Omega,
$$
and
$$\|T(t)\|_{L(H)}\le\|c\|_{L^\infty(Q)}.$$
With no loss of generality we may assume that $c(t,x)\le 0$ in $Q$ this beeing achieved by stydying the problem satisfied by $z=ye^{\gamma t} $ with $\gamma>2\|c\|_{L^\infty(Q)}$. With this assumption $T(t)$ is a positive operator for all $t>0$ \textit{i.e.} if $f\in H, f\ge0$ we have $T(t)f\ge0,\forall t>0$.

  Denote by $S(t)$ the restriction to $H$ of the  semigroup in $V^*$ generated by $-\A$.  
  
  Observe that if $u\in V$ then the positive part $u^+=\max\{u,0\}\in V$ and 
   $$
   a(u^+,u^-)=0.
   $$
   Consequently, by the Beurling--Deny criterion Theorem \ref{th-BD}, one concludes that $S(t),t>0$ is positive.
  
   Moreover, as $C_0^\infty(\Omega)\subset V\subset H^1(\Omega)$ one knows that if $\omega\subset\Omega$ is a measurable set then
   $$
   1_\omega V\subset V
   $$
   implies that $\mu(\omega)=0$ or $\mu(\Omega\setminus\omega)=0$ (see \cite{arendt_heat}).
   As a consequence $S(t) $ is   irreducible (Theorem 10.1.5 in \cite{arendt_heat}).
   Now, as $S(t)$ is analytic semigroup in $H$ one has that $S(t)$ is positivity improving, by Theorem \ref{th-pozimpr}.

   We study now existence for the Cauchy problem \eqref{eq-perturb} by a fixed point argument. Let $\delta>0$ to be fixed later. For $z\in L^2(0,\delta;H)$ define $y\in L^2(0,\delta;H)$ to be the solution to problem
   \begin{equation}
   	\label{eq-perturb1}
   	\left\{
   	\begin{array}{l}
   		y'(t)+\A y(t)=g(t)+T(t)z(t),\,t>0\\
   		y(0)=y_0,
   	\end{array}
   	\right.
   \end{equation}
   The variational solution to \eqref{eq-perturb1}, which is also a mild solution, has the form
   \begin{equation}
   	\label{Tpfix}
   	y(t)=S(t)y_0+\int_0^tS(t-s)[g(s)+T(s)z(s)]ds=:[\mathcal{T}z](t)
   \end{equation}
   A solution to \eqref{eq-perturb} on $[0,\delta]$ is a fixed point of operator $\mathcal{T}$ in $L^2(0,\delta;H)$. It is a standard argument to show that, for $\delta>0$ small enough, depending only on $\|T(t)\|_{L^\infty(0,\delta;L(H))},$ $\mathcal{T}$ is a contraction in $L^2(0,\delta;H)$. Indeed, knowing that the semigroup $S$ has an at most exponential growth, \textit{i.e.} there exists $M,\gamma>0$ such that
   $$
   \|S(t)\|_{L(H)}\le Me^{\gamma t},\,t>0,
   $$
   we have
   $$
   \|\mathcal{T}(z_1-z_2)\|_{L^2(0,\delta;H)}=\|S*[T(\cdot)(z_1(\cdot)-z_2(\cdot))]\|_{L^2(0,\delta;H)}\le
   $$
   $$
   \le\|S(\cdot)\|_{L^1(0,\delta;L(H))}\cdot\|z_1-z_2\|_{L^2(0,\delta;H)}\le \frac M\gamma (e^{\gamma \delta}-1)\cdot\|z_1-z_2\|_{L^2(0,\delta;H)}
   $$
   and $\mathcal{T}$ is a contraction is $\delta $ is chosen such that $\frac M\gamma (e^{\gamma \delta}-1)<1$ and Banach fixed point theorem implies existence and uniqueness of the solution to \eqref{eq-perturb} on $[0,\delta]$. The solution is then extended to $\R_+$  by solving Cauchy problems on successive intervals of length $\delta$.
   
   Observe now that by the positivity of $T(t)$ and of the semigroup $S(t)$ we have that, for positive source $g\ge0$ and positive initial data $y_0\ge0$, if $z\in L^2(0,\delta;H_+)$ with $H_+=\{u\in H|u\ge0\}$,
   \begin{equation}
   	\label{Tpfix1}
   	[\mathcal{T}z](t)=S(t)y_0+\int_0^tS(t-s)[g(s)+T(s)z(s)]ds\ge0
   \end{equation}   
   as  sum of two positive terms; thus $L^2(0,\delta;H_+)$ is an invariant closed set under the action of $\mathcal{T}$. Consequently, the fixed point belongs to this set and thus the solution $y$ to \eqref{eq-perturb} is positive.

   Assume now that the   solution $y$ corresponding to positive initial data $y_0$ and positive source $g$, vanishes at some moment   $\bar t>0$ on a set of nonzero measure, \textit{i.e.} 
   \eqref{vanish} holds, and observe that choosing as initial time some $0<t_0<\bar t$ we have 
   $$
   y(\bar t)=S(\bar t-t_0)y(t_0)+\int_{t_0}^{\bar t}S(\bar t-s)[g(s)+T(s)z(s)]ds.
   $$
  So, by the positivity of the semigroup, the second term above is positive and thus   $y(\bar t,\cdot)\ge S(\bar t-t_0)y(t_0)\ge0$, which implies
  $$
  \{x\in\Omega:y(\bar t,x)=0\}\subset \{x\in\Omega:[S(\bar t-t_0)y(t_0)](x)=0\}.
  $$
Thus, by \eqref{vanish}, $\mu(\{x\in\Omega:[S(\bar t-t_0)y(t_0)](x)=0\})>0$ and by positivity improving property of the semigroup   necessarily we must have  $y(t_0, \cdot)=0$ \textit{a.e.} in $\Omega$.
   As $t_0$ is arbitrarily chosen in $(0,\bar t)$, we conclude that $y(t,x)=0$ and consequently also  $g(t,x)=0$ for $0\le t<\bar t$ \textit{a.e.} $x\in\Omega$.\fin
   

   

 \subsubsection*{Weakly coupled  linear parabolic systems. }

 Consider the linear parabolic system \eqref{sysinitial} with the respective assumptions concerning the coefficients of the operators. For sources $g_i\in L^2(0,T;\Omega)$ we describe the variational setting of the problem. Denote by
    $$\Gamma_{i,D}=\{x\in\partial\Omega|\, \beta_i(x)=0\},\, \Gamma_{i,R}=\partial\Omega\setminus\Gamma_{i,D}.$$ 
      The variational formulation for the problem needs the Hilbert spaces:  
      $$ H=L^2(\Omega),\, V_i=\lbrace v\in H^1(\Omega); v|_{\Gamma_{i,D}}=0\rbrace,$$
      $$
      \textbf{H}=H\times\cdots_n\times H,\textbf{V}=V_1\times\cdots_n\times V_n
      $$
     with the canonical Hilbert structures of  product spaces, and the corresponding Gelfand triple
    $$\textbf{V}\subset \textbf{H}\subset \textbf{V}^*.$$
    The bilinear form $\textbf{a}:\textbf{V}\times\textbf{V}\rightarrow \R$ is defined, for $\textbf{u}=(u_1,\ldots,u_n), \textbf{v}=(v_1,\ldots,v_n)\in  \textbf{V}$ as
   \begin{equation}\begin{array}{l}\label{avect}
   \textbf{a}(\textbf{u},\textbf{v})=a_1(u_1,v_1)+ \ldots +a_n(u_n,v_n),\quad  a_i: V_i\times V_i\longrightarrow\mathbb{R},\\
   \\
\displaystyle a_i(u_i,v_i)=\int_\Omega\langle A_i(x)\nabla u_i, \nabla v_i\rangle+\langle \textbf{b}_i,\nabla u_i\rangle v_idx+\int_{\Gamma_{R_i}}\eta(x) u_i v_i d\sigma,\ u_i,v_i\in V_i.
   \end{array}
     \end{equation}    
    For $\textbf{y}_0=(y_{1,0},\ldots,y_{n,0})\in \textbf{H}$ and   $\textbf{g}=(g_1,\ldots,g_n)\in L^2(0, T; \textbf{V}^*)$ a variational solution to \eqref{sysinitial} is a function $\textbf{y}\in L^2(0,T; \textbf{V})\cap   C([0,T];\textbf{H})$  satisfying
    \begin{equation}
    \label{varsolsyst}\begin{array}{l}

    \displaystyle\langle \textbf{y}(t),\textbf{v}\rangle_\textbf{H}-\langle \textbf{y}_0,\textbf{v}\rangle_H+\int_0^t \textbf{a}(\textbf{y}(\tau),\textbf{v})d\tau+\int_0^t\sum_{i=1}^n\langle L_i^0(\tau)\textbf{y}(\tau),v\rangle_Hd\tau=\\
    \\
    \displaystyle=\int_0^t\langle \textbf{g}(\tau),\textbf{v}\rangle_{\textbf{V}^* ,\textbf{V}}d\tau,\quad\,\forall t\in (0,T),\forall \textbf{v}\in V.
    \end{array}
    \end{equation}
    
    \begin{remark}\label{rem_H22}
       Existence, uniqueness and continuous dependence on data of the variational solution $\textbf{y}\in C([0,T]; \textbf{H})\cap L^2(0,T;\textbf{V})$ for this non-autonomous problem is also a consequence of the results in  \cite{lionsmag}. 
       
       Denote, as before $\mathbf{\A}:\textbf{V}\rightarrow \textbf{V}^*$ the operator associated to the form $\textbf{a}$ given in \eqref{a-form}. Consider also the associated operator $\A^\flat$ defined in \eqref{Aflat}. Again, the regularity theory for general elliptic boundary value problems (see \cite{necas}) ensure that $D(\mathbf{\A}^\flat)\subset (H^2(\Omega))^n$. Denote by $\textbf{S}(t)$ the analytic semigroup generated by $\mathbf{\A}$ or $\mathbf{\A}^\flat$ in $V$ respectively in $\textbf{H}$ and analyse the system \eqref{sysinitial} as a perturbed autonomous equation in the next result which is a (strong) invariance principle of the cone of $L^2$ positive functions under the action of the flow associated to a class of weakly coupled parabolic systems.

       \end{remark}

       \begin{theorem}\label{strmaxsys}
       	Assume that $\textbf{y}\in C([0,T]; \textbf{H})\cap L^2(0,T;\textbf{V})$ is variational solution to \eqref{sysinitial} corresponding to positive initial data $\textbf{y}_0=(y_{i,0})_i\ge0$ and positive source $\textbf{g}=(g_{i})_i\in L^2(Q), \textbf{g}\ge0$. Assume
       	also that
       	$$c_{il}\leq 0,  \quad i,l=\overline{1,n}, i\neq l.$$       	
       	 Then 
       	\begin{itemize}
       		\item[i)] $\textbf{y}(t,\cdot)\ge0$ for $t\in[0,T]$;
       		\item[ii)] If for some $\bar t>0$  the variational solution $\textbf{y}$ vanishes on a set of nonzero measure, \textit{i.e.}
       		\begin{equation}\label{vanish1}
       			\mu\{x\in\Omega:\textbf{y}(\bar t,x)=0\}>0,
       		\end{equation}
       		then $\textbf{y}(t,x)=0$ and $\textbf{g}(t,x)=0$ for $0\le t<\bar t$.
       	\end{itemize}
       \end{theorem}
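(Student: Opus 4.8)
\proof[Proof sketch]
The plan is to mirror the proof of the scalar strong maximum principle, Theorem~\ref{strongmaxweaksol}, recasting the coupled system as an autonomous problem in the product space $\textbf{H}$ perturbed by the zero order coupling, and then to transfer the vanishing information to the single equations satisfied by each component. First I would reduce to the case in which all diagonal coefficients are non-positive: setting $z_i=e^{\gamma t}y_i$ with $\gamma>\max_i\|c_{ii}\|_{L^\infty(Q)}$ replaces $c_{ii}$ by $c_{ii}-\gamma\le0$, leaves the off diagonal coefficients $c_{il}$ ($i\ne l$) and their sign unchanged, multiplies the sources by $e^{\gamma t}\ge0$, and preserves the sign of every component; so without loss of generality $c_{ii}\le0$ as well. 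With this normalization I would write the system \eqref{sysinitial} in the abstract form
\begin{equation*}
\textbf{y}'(t)+\mathbf{\A}\textbf{y}(t)=\textbf{g}(t)+\mathbf{T}(t)\textbf{y}(t),\qquad \textbf{y}(0)=\textbf{y}_0,
\end{equation*}
where $\mathbf{\A}$ is the block diagonal operator associated to the form $\textbf{a}$ in \eqref{avect}, so that $\textbf{S}(t)=\mathrm{diag}(S_1(t),\dots,S_n(t))$, and $[\mathbf{T}(t)\textbf{u}]_i=-\sum_{l=1}^n c_{il}(t,x)u_l$. Since each $S_i(t)$ is positive by the Beurling--Deny criterion (exactly as in the proof of Theorem~\ref{strongmaxweaksol}), the product semigroup $\textbf{S}(t)$ leaves the cone $\textbf{H}_+=\{\textbf{u}\in\textbf{H}:u_i\ge0,\ i=\overline{1,n}\}$ invariant.

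For part~(i) the decisive point is that the sign condition makes $\mathbf{T}(t)$ positive on this cone: for $\textbf{u}\in\textbf{H}_+$ one has $[\mathbf{T}(t)\textbf{u}]_i=-c_{ii}u_i-\sum_{l\ne i}c_{il}u_l\ge0$, because $-c_{ii}\ge0$ after the normalization and $-c_{il}\ge0$ by hypothesis. I would then run the contraction argument of Theorem~\ref{strongmaxweaksol} verbatim on a short interval $[0,\delta]$ for the map
\begin{equation*}
[\mathcal{T}\textbf{z}](t)=\textbf{S}(t)\textbf{y}_0+\int_0^t\textbf{S}(t-s)\bigl[\textbf{g}(s)+\mathbf{T}(s)\textbf{z}(s)\bigr]\,ds,
\end{equation*}
the contraction constant being controlled by $\|\mathbf{T}\|_{L^\infty(0,\delta;L(\textbf{H}))}\le\max_i\sum_l\|c_{il}\|_{L^\infty(Q)}$. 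The cone $L^2(0,\delta;\textbf{H}_+)$ is closed and invariant under $\mathcal{T}$ as a sum of positive terms, so the unique fixed point---which coincides with the variational solution---lies in it; iterating on successive intervals gives $\textbf{y}(t,\cdot)\ge0$ on $[0,T]$.

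For part~(ii) I would decouple and invoke the scalar result. Testing \eqref{varsolsyst} against $\textbf{v}=(0,\dots,v_i,\dots,0)$ shows that each $y_i$ is the variational solution of the scalar problem \eqref{pbparabmax} with operator $L_i+c_{ii}$ and source
\begin{equation*}
\hat g_i:=g_i-\sum_{l\ne i}c_{il}y_l=g_i+\sum_{l\ne i}(-c_{il})y_l,
\end{equation*}
which is non-negative in $Q$ since $g_i\ge0$, $-c_{il}\ge0$ and, by part~(i), $y_l\ge0$. The hypothesis \eqref{vanish1} provides a set $E$ with $\mu(E)>0$ on which the whole vector $\textbf{y}(\bar t,\cdot)$ vanishes, hence $\mu\{x:y_i(\bar t,x)=0\}\ge\mu(E)>0$ for every $i$. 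Applying the scalar Theorem~\ref{strongmaxweaksol} to each component yields $y_i(t,\cdot)=0$ and $\hat g_i(t,\cdot)=0$ for $0\le t<\bar t$; since $\hat g_i$ is a sum of non-negative terms, $\hat g_i=0$ forces $g_i=0$. Collecting over $i$ gives $\textbf{y}(t,x)=0$ and $\textbf{g}(t,x)=0$ for $0\le t<\bar t$.

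The main obstacle is conceptual rather than computational: the decoupled semigroup $\textbf{S}(t)$ is block diagonal and hence never irreducible on $\textbf{H}$ (each subspace $\{0\}\times\cdots\times H\times\cdots\times\{0\}$ is invariant), so the positivity improving mechanism of the scalar proof cannot be applied directly at the level of the system. The device that circumvents this is precisely the sign condition $c_{il}\le0$ for $i\ne l$, which lets one move the coupling to the source side while keeping it non-negative; this reduces the strong maximum principle for the system to $n$ applications of the scalar Theorem~\ref{strongmaxweaksol}, in each of which the component semigroup $S_i(t)$ is genuinely irreducible and positivity improving.
\fin
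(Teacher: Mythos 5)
Your proposal is correct and follows essentially the same route as the paper: rewrite the system as a block-diagonal autonomous problem perturbed by the positive coupling operator $\mathbf{T}(t)$, obtain positivity from invariance of the cone $L^2(0,\delta;\textbf{H}_+)$ under the fixed-point map, and then derive the unique continuation statement by treating each component as a scalar problem with the nonnegative source $g_i-\sum_{l\ne i}c_{il}y_l$ and invoking Theorem~\ref{strongmaxweaksol}. You merely spell out the componentwise decoupling step that the paper compresses into one sentence, and your observation that the product semigroup fails to be irreducible (which is why the reduction to the scalar case is needed) is accurate.
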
 
       \begin{proof}
     We write the problem  \eqref{sysinitial}  as
     \begin{equation}\label{sysinitial1}
             \left\lbrace
             \begin{array}{ll}
             D_ty_i-\Delta_{A_i}y_i+L^1_iy_i=g_i-L_i^0(t)\textbf{y}, &(0,T)\times\Omega,
             \\
             \beta_i(x)\frac{\partial y_i}{\partial \nu_{A_i}}+\eta_i(x)y_i=0 ,  &(0,T)\times\partial\Omega, 
             \end{array},\quad i=\overline{1,n}.
             \right.\end{equation} 
     and in       the abstract form
    \begin{equation}
    \label{sys-perturb}
    \left\{
    \begin{array}{l}
    \textbf{y}'(t)+\mathbf{\A} \textbf{y}(t)=g(t)+\textbf{T}(t)\textbf{y}(t),\,t>0\\
   \textbf{ y}(0)=\textbf{y}_0,
    \end{array}
    \right.
    \end{equation}
    where 
    $$
   \textbf{ T}(t)\in L(\textbf{H}), [\textbf{T}(t)\textbf{u}]=(-L_i^0(t)\textbf{u})_i,\, t>0, \textbf{u}\in \textbf{H},
    $$
    where the operators $L_i^0(t)\in L(\textbf{H}, H)$ are given 
    $$
   [ L^0_i(t)\textbf{u}](x)=\sum\limits_{l=\overline{1,n}}c_{il}(t,x) u_l(x),\,x\in\Omega.
    $$
    Observe that    
        $$\|\textbf{T}(t)\|_{L(H)}\le\|\textbf{c}\|_{L^\infty(Q)}=\max_{i,l}\|c_{il}\|_{L^\infty(Q)}.$$
    With no loss of generality we may assume that $\forall i, c_{ii}(t,x)\le 0$ in $Q$ as we may equivalently study positivity   $z=ye^{\gamma t} $ with $\gamma>2\|\textbf{c}\|_{L^\infty(Q)}$. 
    
    Existence for the Cauchy problem \eqref{sys-perturb} may also be treated, as in the scalar case,  by a fixed point argument.      
    For $\textbf{z}\in L^2(0,\delta;\textbf{H})$ define, as in the scalar case,  $\textbf{y}\in L^2(0,\delta;H)$ to be the solution to problem
           \begin{equation}
           	\label{sys-perturb1}
           	\left\{
           	\begin{array}{l}
           		\textbf{y}'(t)+\mathbf{\A}\textbf{y}(t)=\textbf{g}(t)+\textbf{T}(t)\textbf{z}(t),\,t>0\\
           		\textbf{y}(0)=\textbf{y}_0.
           	\end{array}
           	\right.
           \end{equation}
          Using the formulation as a mild solution,  the variational solution  to \eqref{sys-perturb1} appears to be a fixed point of the operator 
           \begin{equation}
           	\label{Tsyspfix}
           [\mathbf{\mathcal{T}}\textbf{z}](t)=\textbf{S}(t)\textbf{y}_0+\int_0^t\textbf{S}(t-s)[\textbf{g}(s)+\textbf{T}(s)\textbf{z}(s)]ds
           \end{equation}
As in the  scalar case one finds a $\delta>0$ such that  $\mathbf{\mathcal{T}}$ is contraction in $L^2(0,\delta;\textbf{H})$ and then solve the successive Cauchy problems on intervals pf length $\delta$, to cover all $[0,T]$.

Observe now that, as $\forall i,l,\,c_{il}\le0$, if $\textbf{y}_0\ge0$,  $\textbf{g}(s)\ge0$ and $z(s)\ge0,\, a.e.\,s\in[0,T]$ we have that $  [\mathbf{\mathcal{T}}\textbf{z}](t)\ge0,\, a.e.\,t\in[0,T]$ and thus, the closed cone 
$L^2(0,\delta;\textbf{H}_+)$ is invariant under the action of the operator $\mathbf{\mathcal{T}}$. Consequently, its' fixed point, which is solution to \eqref{sysinitial},\eqref{sys-perturb}, must be positive.

Positivity improving property may be now obtained by analysing each equation independently, exactly as in the scalar case. \end{proof}

We may now formulate a positivity improving result concerning semilinear systems.
 \begin{theorem}\label{strmaxsys1}
	Consider system \eqref{rd-sys} with the operator and boundary conditions satifying hypothesis (H2). Assume that the nonlinearities $\textbf{f}=(f_i)_i$, $f_i:\R\times\overline\Omega\times\R^n\longrightarrow\R$ are $C^1$ smooth, and 

\begin{equation}\label{hypinv}
f_i(t,x,y_1,\ldots,y_{i-1},0,y_{i+1},\dots, y_n)\leq 0,  i =\overline{1,n},   t>0,x\in\overline\Omega,\,\textbf{ y}\ge0.
\end{equation}
		Assume that $\textbf{y}\in C([0,T]; \textbf{H})\cap L^2(0,T;\textbf{V})\cap L^\infty(Q)$ is variational solution  corresponding to positive initial data $\textbf{y}_0=(y_{i,0})_i\ge0$ and positive source $\textbf{g}=(g_{i})_i\in [L^2(Q)]^n, \textbf{g}\ge0$.
	Then 
	\begin{itemize}
		\item[i)] $\textbf{y}(t,\cdot)\ge0$ for $t\in[0,T]$;
		\item[ii)] If for some $\bar t>0$  the variational solution $\textbf{y}$ vanishes on a set of nonzero measure, \textit{i.e.}
		\begin{equation}\label{vanish2}
			\mu\{x\in\Omega:\textbf{y}(\bar t,x)=0\}>0,
		\end{equation}
		then $\textbf{y}(t,x)=0$ and $\textbf{g}(t,x)=0$ for $0\le t<\bar t$.
	\end{itemize}
\end{theorem}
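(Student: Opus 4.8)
The plan is to reduce the semilinear system \eqref{rd-sys} to the scalar and linear parabolic results already proved, exploiting that the given solution $\textbf{y}\in L^\infty(Q)$ and that $\textbf{f}\in C^1$, so that each $f_i(t,x,\textbf{y})$ is a bounded Nemytskii term. I will treat positivity (i) and the unique continuation (ii) separately: (i) by a direct energy estimate on the negative parts, and (ii) by freezing the diagonal of the nonlinearity along the (now known to be nonnegative) solution and invoking the scalar strong maximum principle Theorem~\ref{strongmaxweaksol} component by component. The main obstacle is structural: the hypothesis \eqref{hypinv} controls $f_i$ only on the face $\{y_i=0\}$ of the positive orthant and gives no sign for $g_i-f_i(t,x,\textbf{y})$ in the interior of the cone, so the cone-invariance fixed-point argument of Theorem~\ref{strmaxsys} and any naive Duhamel scheme fail; the whole difficulty is to make \eqref{hypinv} usable despite this.

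For part (i) I would test the $i$-th equation with $-y_i^-\in V_i$ (admissible, since $y_i^-$ inherits the $H^1$ regularity and the Dirichlet trace of $y_i$). The time-derivative pairing yields $\tfrac12\frac{d}{dt}\|y_i^-\|_{L^2(\Omega)}^2$, and $a_i(y_i,-y_i^-)=a_i(y_i^-,y_i^-)$ because $\nabla y_i^+$ and $\nabla y_i^-$ have disjoint supports, so
\[
\tfrac12\frac{d}{dt}\|y_i^-\|_{L^2(\Omega)}^2+a_i(y_i^-,y_i^-)+\int_\Omega f_i(t,x,\textbf{y})(-y_i^-)\,dx=-\int_\Omega g_iy_i^-\,dx,
\]
with right-hand side $\le 0$ since $g_i\ge 0$. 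The key step is the fundamental-theorem-of-calculus splitting
\[
f_i(t,x,\textbf{y})=f_i(t,x,y_1^+,\ldots,0,\ldots,y_n^+)+c_{ii}(t,x)\,y_i-\sum_{l\ne i}\kappa_{il}(t,x)\,y_l^-,
\]
where $c_{ii},\kappa_{il}\in L^\infty(Q)$ because $\textbf{y}\in L^\infty(Q)$ and $\textbf{f}\in C^1$. On $\{y_i<0\}$ one has $-y_i^-=y_i$, the vector $(y_1^+,\ldots,0,\ldots,y_n^+)$ is nonnegative, and \eqref{hypinv} forces $f_i(t,x,y_1^+,\ldots,0,\ldots,y_n^+)(-y_i^-)\ge 0$; this good-sign term is dropped, while the remaining contributions are bounded by $C\sum_l\|y_l^-\|_{L^2(\Omega)}^2$. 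Using coercivity $a_i(y_i^-,y_i^-)\ge-\mu\|y_i^-\|_{L^2(\Omega)}^2$, summing over $i$ and using $\textbf{y}_0\ge 0$, Gronwall's lemma gives $\sum_i\|y_i^-(t)\|_{L^2(\Omega)}^2\equiv 0$, i.e. $\textbf{y}(t,\cdot)\ge 0$.

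For part (ii), once $\textbf{y}\ge 0$ is known, I freeze the diagonal along the solution: writing $f_i(t,x,\textbf{y})=\phi_i(t,x)+c_{ii}(t,x)y_i$ with $\phi_i(t,x)=f_i(t,x,y_1,\ldots,0,\ldots,y_n)$ and $c_{ii}\in L^\infty(Q)$, each component solves the scalar linear problem
\[
D_ty_i-\Delta_{A_i}y_i+L^1_iy_i+c_{ii}(t,x)y_i=g_i-\phi_i(t,x),
\]
with the boundary conditions of (H2). Since $\textbf{y}$ is fixed, $\phi_i$ and $c_{ii}$ are fixed data and $\tilde g_i:=g_i-\phi_i\ge g_i\ge 0$ because $\phi_i\le 0$ by \eqref{hypinv} and $\textbf{y}\ge 0$; thus this is exactly the scalar problem \eqref{pbparabmax} with nonnegative source and initial data. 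The hypothesis \eqref{vanish2} says the set where all components vanish at $\bar t$ has positive measure, so for each $i$ the scalar solution $y_i(\bar t,\cdot)$ vanishes on a set of positive measure, and Theorem~\ref{strongmaxweaksol} gives $y_i\equiv 0$ and $\tilde g_i\equiv 0$ on $[0,\bar t)$. On $[0,\bar t)$ all components then vanish, whence $\phi_i=f_i(t,x,\textbf{0})\le 0$ by \eqref{hypinv}, so $g_i=\phi_i\le 0$; combined with $g_i\ge 0$ this forces $g_i\equiv 0$ on $[0,\bar t)$, completing the proof.

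As emphasized, the crux is that \eqref{hypinv} is only a face condition, so positivity cannot be read off from an invariance property of a Duhamel map as in the linear Theorem~\ref{strmaxsys}. The splitting above is what resolves this: it isolates the single term to which \eqref{hypinv} applies with the correct sign, and relegates the diagonal and the off-diagonal corrections — which carry no sign information — to a Gronwall remainder controlled by the $L^\infty$ bounds on $\textbf{y}$ and on the derivatives of $\textbf{f}$.
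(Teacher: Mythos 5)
Your proof is correct, and it splits into two halves of different character relative to the paper. For part (ii) you follow essentially the paper's route: the paper also linearizes only the diagonal by the fundamental theorem of calculus, writing $f_i(t,x,\textbf{y})=c^{\textbf{y}}_i y_i - \bar g_i$ with $\bar g_i=-f_i(t,x,y_1,\ldots,0,\ldots,y_n)\ge 0$ by \eqref{hypinv}, and then invokes the linear invariance/unique-continuation result (Theorem \ref{strmaxsys}, applied to the decoupled system \eqref{rd-sys-lin-inv}, which is the same as applying the scalar Theorem \ref{strongmaxweaksol} componentwise as you do). For part (i), however, you take a genuinely different and more careful route. The paper deduces positivity from the same linearized system, but the assertion $\bar g_i\ge 0$ there rests on \eqref{hypinv}, which is only hypothesized on the closed positive orthant $\textbf{y}\ge 0$ --- i.e.\ on exactly the property being proved; the paper leaves this point implicit. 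Your energy estimate on the negative parts avoids this: by evaluating $f_i$ at the projected point $(y_1^+,\ldots,0,\ldots,y_n^+)$, which is nonnegative regardless of the sign of $\textbf{y}$, you place the only term to which \eqref{hypinv} applies in a position where it has a usable sign, and you control the remaining FTC increments (bounded by the $C^1$ bound of $\textbf{f}$ on the range of $\textbf{y}\in L^\infty(Q)$) by Gronwall. What your approach buys is a self-contained, non-circular proof of (i) at the cost of the standard technical ingredients (admissibility of the test function $-y_i^-\in V_i$, the chain rule $\langle D_t y_i,-y_i^-\rangle=\frac12\frac{d}{dt}\|y_i^-\|^2_{L^2(\Omega)}$ for variational solutions, and $a_i(y_i^+,y_i^-)=0$, all of which are consistent with the Beurling--Deny computation already made in the paper); what the paper's approach buys is brevity, by reusing the linear machinery wholesale. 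One cosmetic remark: in the last step of (ii) you conclude $g_i\equiv 0$ from $g_i=\phi_i\le 0$ and $g_i\ge 0$, whereas the paper can simply use its standing assumption $f_i(t,x,\textbf{0})\equiv 0$ from (H3); both are valid.
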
 
   \begin{proof}
    
 Observe that, by a linearization mechanism,  $\textbf{y}$ is variational solution to the "linearized" system
    \begin{equation}\label{rd-sys-lin-inv}
    	\left\lbrace
    	\begin{array}{ll}
    		D_ty_i-\sum\limits_{j,k=1}^N D_j(a_i^{jk} D_ky_i)+\sum\limits_{k=1}^N b_i^k D_ky_i+c^{\textbf{y}}_i(t,x)y_i=g_i+\bar g_i  &\text{in }(0,T)\times\Omega,
    		\\
    		\displaystyle\beta_i(x)\frac{\partial y_i}{\partial \nu_{A_i}}+\eta_i(x)y_i=0  &(0,T)\times\partial\Omega,  \\
    		
    	\end{array} i\in\overline{1,n},
    	\right.\end{equation} 
    where 
    $$
    c^{\textbf{y}}_i(t,x)=\int_0^1\frac{\partial}{\partial y_i}f_i(t,x,y_1(t,x),\ldots,y_{i-1}(t,x),\tau y_i(t,x),y_{i+1}(t,x),\dots, y_n(t,x))d\tau,
    $$
    and 
    $$
    \bar g_i(t,x)= -f_i(t,x,y_1(t,x),\ldots,y_{i-1}(t,x),0,y_{i+1}(t,x),\dots, y_n(t,x))d\tau.
    $$
    As $\textbf{y}\in [L^\infty(Q)]^n$ we have that $ c^{\textbf{y}}_i,\bar g_i\in L^\infty(Q),\forall i$ and by hypothesis \eqref{hypinv} we have that $\bar g_i\ge 0$. The conclusion now follows by Theorem \ref{strmaxsys}.\end{proof}
    
     \begin{comments}
     
     We based our presentation of semigroups defined by forms and corresponding positivity and invariance properties  on the lecture notes of Wolfgang Arendt, Heat Kernels \cite{arendt_heat}. 
          We refer to \cite{arendt_heat}, \cite{ouh2005} for results concerning positivity of semigroups generated by forms and characterization of invariance of closed, convex sets under the action of the flow.
     
     Positivity improving properties of the semigroups were studied in the papers of O. Bratteli, A. Kishimoto, D. Robinson \cite{kisrob1} and A. Kishimoto, D. Robinson \cite{kisrob2}; the characterisation of positivity improving property using analyticity of the semigroup combined with irreducibility was established by A. Majewski, D. Robinson \cite{majrob1981} in the more general framework of semigroups in Banach lattices.
     
     The invariance properties and the strong maximum type principles  established here in the variational setting given by forms are versions of the results in a classical setting from \cite{wein},\cite{amann}, \cite{martin} with the mention that our approach is not suitable for time dependent coefficients in the principal part.
    
     We also mention the paper \cite{aredierouh2013} where the invariance of convex sets under flows generated by non-autonomous flows is studied but, while this paper may be used for the positivity of solutions in our situation,  positivity improving results we establish are not consequences of this work.

      \end{comments}
      

 \section{$L^2$ Carleman estimates with boundary observations}
 \label{sec:Carleman}
The  Carleman estimates we establish for parabolic problems with homogeneous boundary conditions and boundary observations are new  in the case of Neumann and Robin boundary conditions and, consequently  we treat here the case of the Robin boundary conditions which covers both situations.
The principal ideas of the proof are very similar to the classic situation of parabolic problems with homogeneous boundry conditions and internal observations  (we refer here to \cite{furima1996}) but due to the particular geometry of the domain and the corresponding choice of auxiliary functions we have to focus on the particular treatment of boundary integrals.

 So, we start with  the linear parabolic system with homogeneous boundary conditions in a domain $\Omega$ with geometry described in hypothesis (H1):
    \begin{equation}\label{Aecinitial2}
   \left\lbrace
   \begin{array}{ll}
   D_ty-\Delta_Ay=\overline{g},&\, \text{ in }(0,T)\times\Omega,\\
   &\\
   \dfrac{\partial y}{\partial \nu_{A}}+\eta(x)y=0 , &\, \text{ on }(0,T)\times\partial\Omega,  \\
   &\\
   y(0,\cdot)=y^0, &\, \text{ in }\Omega,\\
   \end{array}
   \right.\end{equation} 
   with 
   \begin{itemize}
   
   \item $A=(a_{ij})_{ij}, a_{ij}\in W^{1,\infty}(\Omega), A=A^\top$ and the operator $\DeltaA$ is given by
   $$[\DeltaA{u}](x)=\text{div}(A(x)\nabla u(x)), x\in\Omega;$$
   \item $\left[\dnuA u\right](x)=\sprod{A(x)\nabla u(x)}{\nu(x)},\, x\in\partial\Omega;$
   \item $\eta\in L^\infty(\partial\Omega), \eta\ge0,$
   \item $\bar g\in L^2(0,T;L^2(\Omega)).$
   \end{itemize}
   
  Now, for the  simplicity of formulations and of computations we introduce the following notations: 
  
  \noindent For $u,v:\Omega\ra\R, F:\Omega\ra\R^n$ with appropriate regularity, denote by 
    \begin{itemize}
     \item $\sprodA{\nabla u}{\nabla v}(x)=\langle A(x)\nabla u(x), \nabla v(x)\rangle,\, x\in\Omega$;
    \item  $\modA{\nabla u}(x)=\sqrt{\langle A(x)\nabla u(x), \nabla v(x)\rangle}$;
    \item  $[\dvA{F}](x)=\text{div }(A(x)F(x)),\, x\in\Omega$.
    
    \end{itemize}
   
   Direct computations show that:
   \begin{itemize}
   \item $\dvA(u\nabla v)=u\DeltaA{v}+\sprodA{\nabla u}{\nabla v}$;
   \item  $\dvA(uF)=\text{div}(AuF)=u\,\dvA F+\sprodA{F}{\nabla u}$;
   \item $\DeltaA({uv})=\text{div }(A\nabla(uv))=\text{div }(vA\nabla u+uA\nabla v)=u\DeltaA{v}+v\DeltaA{u}+2\sprodA{\nabla u}{\nabla v}$.
   \end{itemize}
   
   One easily verifies the Gauss-Ostrogradski-Green type formulas: for $F\in H^1(\Omega;\R^N),u\in H^1(\Omega),v\in H^2(\Omega)$ we have
   \begin{eqnarray}
   \int_{\Omega}\text{div}_A Fdx=\int_{\Omega}\text{div}(AF)dx=\int_{\partial\Omega}\langle AF, \nu\rangle d\sigma=\int_{\partial\Omega}\sprodA{F}{\nu}d\sigma\\
   \int_{\Omega}u\DeltaA{v}dx=-\int_\Omega\sprodA{\nabla u}{\nabla v}+ \dvA(u\nabla v)dx=
-\int_\Omega\sprodA{\nabla u}{\nabla v}dx+ \int_{\partial\Omega} u\dnuA{v}d\sigma.
   \end{eqnarray}
   
  Concerning the auxiliary function $\psi_0$ given in \rm{(H1)} ( existence of which is related to the geometry of the domain $\Omega$) we make the following remark: 
  \begin{remark}\label{remsubharmpsi}
  One may assume that $\psi_0$ satisfies
 \begin{equation}\label{pozdelta}
 \Delta_A \psi_0>0 \text{ in }\overline \Omega.
 \end{equation}
  Indeed, if $\psi_0$ with properties \eqref{psi0} exists, we may replace $\psi_0$ with $e^{\mu\psi_0}$, enjoying also \eqref{psi0}, and,  moreover, for $\mu>\mu_0>0$ big enough
  $$
  \Delta_A e^{\mu\psi_0}=(\mu^2|\nabla \psi_0|_A^2+\mu\Delta_A \psi_0)>0.
  $$ 
  
  \end{remark}  
Observe that if we take in Remark     the function $\psi_0$ given in \rm{(H1)}

Introduce now, similarly to the standard case the following auxiliary functions constructed by using the function $\psi_0$ given in \rm{(H1)}.   
Let
  $\psi=\psi_0+K$, with a constant $K>0$ big enough such that 
 $$\frac{\sup\psi}{\inf\psi}\leq\frac{8}{7}.$$  
 Consider now the auxiliary functions
  $$\varphi(t,x):=\frac{e^{\lambda\psi(x)}}{t(T-t)},\quad
 \alpha(t,x):=\frac{e^{\lambda\psi(x)}-e^{1.5\lambda\|\psi\|_{C(\overline{\Omega})}}}{t(T-t)}.$$
   This choice for the function $\psi$ is appropriate in order to have the following estimates for time derivatives of auxiliary functions:
  	$$|\varphi_t|\leq C\varphi^2,\quad |\alpha_t| \leq C\varphi^2,\quad |\alpha_{tt}| \leq C\varphi^3.$$
 uniformly in $ \lambda>0$, with a constant  $C=C(T)$.
 
 Here we denoted by $\alpha_t=D_t\alpha$ and in computations we will use variables as indices in order  to indicate corresponding partial derivatives.

  Let also 
   \begin{equation}
   \tilde{\psi}=2K-\psi,
   \end{equation} 
   and the corresponding weight functions
   \begin{equation}\label{fialfatilda}
   \tilde{\varphi}(t,x):=\frac{e^{\lambda\tilde{\psi}(x)}}{t(T-t)},\quad
   \tilde{\alpha}(t,x):=\frac{e^{\lambda\tilde{\psi}(x)}-e^{1.5\lambda\|\psi\|_{C(\overline{\Omega})}}}{t(T-t)}.
   \end{equation}
  for which we also have
    $$|\tilde{\varphi}_t|\leq C\tilde{\varphi}^2,\quad|\tilde{\alpha}_t| \leq C\tilde{\varphi}^2,\quad |\tilde{\alpha}_{tt}| \leq C\tilde{\varphi}^3.$$ 
 
 \medskip
 
\noindent  We describe now the observation operator we will consider. Let 
 $$ \gamma,\delta\in L^\infty(\Gamma_1)$$
 for which there exists some positive constant $\epsilon>0$ such that 
  $$
  \gamma(x)\eta(x)-\delta(x)\ge\epsilon>0,\quad x\in\Gamma_1.
  $$
 The observation operator is defined by 
 $$\zeta:H^2(\Omega)\rightarrow L^2(\Gamma_1),\, \zeta(y)=\gamma \frac{\partial }{\partial\nu_A}y+\delta y.$$
 The conditions imposed to $\gamma,\delta$ express the fact that  the observation operator and the boundary operator have null intersection of corresponding kernels and there exists $K>0$ such that for the solutions to \eqref{Aecinitial2} one has 
 \begin{equation}
 \label{zetagamma1}
 |y(t,x)|+\left|\dnuA{y}(t,x)\right|\le K |\zeta(y)(t,x)|,\quad x\in\Gamma_1,t>0. 
 \end{equation}
 \medskip
 
 \noindent The main result concerning Carleman estimates is:
 \begin{proposition}\label{lemaCarlemanclassic} For $g\in L^2(Q)$,
 there exist constants $\lambda_0=\lambda_0(\Omega),$ $s_0=s_0(\Omega)$ such that, for any $\lambda\geq\lambda_0$, $ s\geq s_0$ and some $C=C(T,\Omega)$, the following inequality holds:
 \begin{equation}\label{classicalCarleman}
 \begin{aligned}
 &\int_{Q}\left[s\lambda^2\varphi|\nabla y|^2+s^3\lambda^4\varphi^{3}|y|^2\right]e^{2s\alpha}dxdt+
 \int\limits_{\Sigma_0}  s^2\lambda^2\varphi^2 |y|^2e^{2s\alpha}d\sigma dt
 \\
 &\leq
 C\left(\int_{Q}|\bar g|^2e^{2s\alpha}dxdt+
 \int\limits_{\Sigma_1}s^3\lambda^3\varphi^{3}|\zeta(y)|^2e^{2s\alpha}d\sigma dt\right),\qquad\quad
 \end{aligned}
 \end{equation}
 for $y\in [H^1(0,T; L^2(\Omega))\cap L^2(0,T; H^2(\Omega))]^n$ solutions of \eqref{sysinitial}.
 \end{proposition}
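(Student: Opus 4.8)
The plan is to run the Fursikov--Imanuvilov scheme, concentrating all the difficulty in the boundary integrals, which are resolved through hypothesis (H1). First I would pass to the conjugated variable $w=e^{s\alpha}y$. Writing $y=e^{-s\alpha}w$ and using $\nabla\alpha=\lambda\varphi\nabla\psi$, a direct computation gives the conjugated operator
\begin{equation*}
Pw:=e^{s\alpha}(D_t-\DeltaA{})(e^{-s\alpha}w)=D_tw-\DeltaA{w}-s^2\modA{\nabla\alpha}^2w+s(\DeltaA{\alpha})w+2s\sprodA{\nabla\alpha}{\nabla w}-s\alpha_tw=e^{s\alpha}\bar g .
\end{equation*}
Since $y$ satisfies $\dnuA{y}+\eta y=0$ and $\partial_{\nu_A}\alpha=\lambda\varphi\,\partial_{\nu_A}\psi_0$, the transformed boundary condition reads
\begin{equation*}
\dnuA{w}=(s\,\partial_{\nu_A}\alpha-\eta)\,w=(s\lambda\varphi\,\partial_{\nu_A}\psi_0-\eta)\,w\qquad\text{on }(0,T)\times\partial\Omega,
\end{equation*}
which will let me trade $\dnuA{w}$ for $w$ in every boundary integrand.

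Next I would split $P=P_1+P_2$ into the formally self-adjoint part $P_1w=-\DeltaA{w}-s^2\modA{\nabla\alpha}^2w-s\alpha_tw$ and the formally skew-adjoint part $P_2w=D_tw+2s\sprodA{\nabla\alpha}{\nabla w}+s(\DeltaA{\alpha})w$, and exploit the identity $\|P_1w\|_{L^2(Q)}^2+\|P_2w\|_{L^2(Q)}^2+2(P_1w,P_2w)_{L^2(Q)}=\|e^{s\alpha}\bar g\|_{L^2(Q)}^2$. The cross term $(P_1w,P_2w)$ is expanded by integration by parts in $t$ (all temporal boundary terms vanish because $e^{s\alpha}$, and hence $w$ together with its derivatives, tends to $0$ as $t\to0^+,T^-$) and in $x$. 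The leading interior contributions are positive and yield
\begin{equation*}
\int_Q\big(s\lambda^2\varphi\,\modA{\nabla\psi}^2|\nabla w|^2+s^3\lambda^4\varphi^3\,\modA{\nabla\psi}^4|w|^2\big)\,dx\,dt,
\end{equation*}
whose coefficients are bounded below since $\modA{\nabla\psi_0}\ge c_0>0$ on $\overline\Omega$ by (H1); the sub-leading interior terms (lower powers of $\lambda$, including those involving $\DeltaA{\psi}$, whose sign is fixed by \eqref{pozdelta}) are of the form $\lotwnw$ and are absorbed once $\lambda\ge\lambda_0$ and $s\ge s_0$.

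The integration by parts in $x$ also produces boundary integrals over $\Sigma=(0,T)\times\partial\Omega$, and this is the \emph{crux}. Here the geometry of (H1) is decisive: since $\psi_0$ is constant on each component $\Gamma_i$, its tangential gradient vanishes, so on $\partial\Omega$ one has $\nabla\psi=\nabla\psi_0=(\partial_\nu\psi_0)\nu$ and hence $\sprodA{\nabla\alpha}{\nabla w}=\lambda\varphi(\partial_\nu\psi_0)\dnuA{w}$; after substituting the boundary condition and one further integration by parts in $t$, \emph{every} boundary integrand becomes a quadratic in $w$ alone, with no surviving tangential derivative of $w$ and no $D_tw$. Collecting the top-order contributions, the dominant boundary term is
\begin{equation*}
-\int_\Sigma s^3\lambda^3\varphi^3\,\modA{\nabla\psi}^2\,(\partial_{\nu_A}\psi_0)\,w^2\,d\sigma\,dt,
\end{equation*}
the remaining boundary terms being of strictly lower order in $s$ and absorbable. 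Because $\psi_0$ attains its minimum on $\Gamma_0$ and its maximum on $\Gamma_1$ with $\nabla\psi_0\ne0$, one has $\partial_{\nu_A}\psi_0<0$ on $\Gamma_0$ and $\partial_{\nu_A}\psi_0>0$ on $\Gamma_1$. Thus on $\Sigma_0$ this term is positive and is kept on the left, where it majorizes the claimed $\int_{\Sigma_0}s^2\lambda^2\varphi^2|y|^2e^{2s\alpha}$; on $\Sigma_1$ it has the unfavorable sign and is moved to the right. There, since on $\Gamma_1$ the pair $(\dnuA{y},y)$ solves the $2\times2$ linear system formed by the boundary condition and $\zeta(y)=\gamma\dnuA{y}+\delta y$, whose determinant $\gamma\eta-\delta\ge\epsilon>0$ is bounded below, estimate \eqref{zetagamma1} gives $w^2=e^{2s\alpha}|y|^2\le K^2e^{2s\alpha}|\zeta(y)|^2$, turning the $\Sigma_1$ contribution into precisely $C\int_{\Sigma_1}s^3\lambda^3\varphi^3|\zeta(y)|^2e^{2s\alpha}$.

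Finally I would collect all estimates, fix $\lambda_0,s_0$ large enough to absorb the lower-order terms, and return to the original variable via $w=e^{s\alpha}y$ and $\nabla w=e^{s\alpha}(\nabla y+s\lambda\varphi\,y\,\nabla\psi)$, so that the cross and zeroth-order parts of $|\nabla w|^2$ are swallowed by the $s^3\lambda^4\varphi^3|y|^2e^{2s\alpha}$ term; this produces \eqref{classicalCarleman}, the vector case following by applying the estimate componentwise and treating the first- and zero-order couplings as part of $\bar g$. The main obstacle is exactly the boundary analysis: it is the vanishing of the tangential gradient of $\psi_0$ (the annular geometry) that eliminates tangential and time derivatives from the boundary integrals, while the sign of $\partial_{\nu_A}\psi_0$ separates the unobserved component $\Gamma_0$ (good sign, retained on the left) from the observed component $\Gamma_1$ (controlled by $\zeta(y)$).
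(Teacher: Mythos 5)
Your overall scheme (conjugation, splitting into symmetric and antisymmetric parts, sign analysis of $\partial_{\nu_A}\psi_0$ on $\Gamma_0$ versus $\Gamma_1$, and the use of \eqref{zetagamma1} on $\Sigma_1$) matches the paper's, but there is a genuine gap at the step you yourself call the crux. Your claim that ``every boundary integrand becomes a quadratic in $w$ alone, with no surviving tangential derivative of $w$'' is false. Integrating the cross term $-2s\int_Q\Delta_Aw\,\sprodA{\nabla\alpha}{\nabla w}\,dx\,dt$ by parts leaves on the boundary the term $s\int_\Sigma\modA{\nabla w}^2\,\dnuA{\alpha}\,d\sigma\,dt$ (see \eqref{t2}), whose integrand is the \emph{full} quadratic form $\modA{\nabla w}^2$, not just $\dnuA{w}$ and $w$. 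The identity $\sprodA{\nabla\alpha}{\nabla w}=\lambda\varphi c(x)\dnuA{w}$ on $\partial\Omega$ disposes of the other cross term but not of this one. Under a Robin or Neumann condition $w$ does not vanish on the boundary, so its tangential gradient survives; on $\Sigma_0$ one has $\dnuA{\alpha}=\lambda\varphi c(x)\modA{\nu}^2$ with $c(x)<0$, so after moving $\mathcal B$ to the right-hand side this term contributes $+s\lambda\int_{\Sigma_0}\varphi|c(x)|\modA{\nu}^2\modA{\nabla w}^2\,d\sigma\,dt$, a positive quantity involving the tangential gradient of $w$ on the \emph{unobserved} boundary, which neither the interior terms nor $\zeta(y)$ can absorb. (For Dirichlet conditions $\nabla w$ is purely conormal on the boundary and your assertion would be essentially correct; the whole point of the proposition is the Neumann/Robin case.)

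The missing idea is the second weight: the paper sets $\tilde\psi=2K-\psi$ with the associated $\tilde\varphi,\tilde\alpha$ of \eqref{fialfatilda}, runs the identical computation for $\tilde w=ye^{s\tilde\alpha}$, and \emph{adds} the two estimates. Since $\varphi=\tilde\varphi$, $\alpha=\tilde\alpha$ and $\dnuA{\tilde\psi}=-\dnuA{\psi}$ on $\Sigma_0$, the dangerous boundary terms in $\modA{\nabla w}^2$ and $|\dnuA{w}|^2$ (and also the $s^3\lambda^3$ terms and the $S_1$ terms) cancel exactly on $\Sigma_0$, leaving only the benign contribution $4\int_{\Sigma_0}s^2\lambda^2\varphi^2c^2\modA{\nu}^2\eta|y|^2e^{2s\alpha}$ of \eqref{s02} --- which is in fact where the $\Sigma_0$ term on the left of \eqref{classicalCarleman} comes from, rather than from a surviving $s^3\lambda^3\varphi^3$ term as you propose. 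On $\Sigma_1$ the two weights no longer coincide, but there the combination has a favorable sign or is controlled by $|\zeta(y)|^2$ via \eqref{zetagamma1}. Without this doubling device (which is precisely what requires the annular geometry of (H1), so that a single globally non-vanishing $\nabla\psi_0$ and its negative both serve as admissible weights) the single-weight argument you outline does not close.
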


  \begin{proof}

   	We denote by $w:=ye^{s\alpha}$   and we consider the corresponding parabolic  problem satisfied by $w$: 
\begin{equation}\label{pbw}\left\lbrace
  	\begin{aligned}
  	&w_t-\Delta_Aw+2s\langle\nabla\alpha,\nabla w\rangle_A-(s^2|\nabla\alpha|_A^2-s\Delta_A\alpha+s\alpha_t)w=\g e^{s\alpha} &\, \text{ in }(0,T)\times\Omega\\
  	&\frac{\partial w}{\partial \nu_A}+\left(\eta(x)-s\lambda\varphi\frac{\partial\psi}{\partial \nu_A}\right)w=0  &\, \text{ on }(0,T)\times\partial\Omega
  	\end{aligned}\right.
  	\end{equation}   	
   	If we denote by 
   	\begin{equation*}
  	\begin{aligned}
  	X_s=X_s(t,x)w:=&\Delta_Aw+(s^2|\nabla\alpha|_A^2+s\Delta_A\alpha+s\alpha_t)w\\
  	X_a=X_a(t,x)w:=&-w_t-2s(\langle\nabla\alpha,\nabla w\rangle_A+s\Delta_A\alpha w)
  	\end{aligned}
  	\end{equation*}
  	we have
  	\begin{equation}\label{pbwX}\left\lbrace
  	\begin{aligned}
  	&X_s+X_a=-\g e^{s\alpha} &\, \text{ in }(0,T)\times\Omega\\
  	&\frac{\partial w}{\partial \nu_A}+\left(\eta(x)-s\lambda\varphi\frac{\partial\psi}{\partial \nu_A}\right)w=0  &\, \text{ on }(0,T)\times\partial\Omega
  	\end{aligned}\right.
  	\end{equation}
  	We multiply scalarly in $L^2(Q)$ equation \eqref{pbwX} by $X_a$ 
  	to obtain
  	
  	\begin{equation}\label{estfi}
  \int_QX_sX_adxdt\leq\int_Q(X_a+X_s)^2dxdt=\int_Q\overline{g}^2e^{2s\alpha}dxdt
  	\end{equation}
  	with
  	\begin{equation}\label{multiplL2}
  	\begin{aligned}
  	\int_QX_sX_adxdt=&\\
  	&-\int_Qw_t\Delta_Awdxdt-2s\int_Q\Delta_A w\langle\nabla\alpha,\nabla w\rangle_Adxdt-2s\int_Q\Delta_A w(\Delta_A\alpha) wdxdt\\
  	&-\int_Q(s^2|\nabla\alpha|_A^2+s\Delta_A\alpha+s\alpha_t)ww_tdxdt\\
  	&-2s\int_Q\langle\nabla\alpha,\nabla w\rangle_A(s^2|\nabla\alpha|_A^2+s\Delta_A\alpha+s\alpha_t)wdxdt\\
  	&-2s\int_Q(\Delta_A\alpha)w(s^2|\nabla\alpha|_A^2+s\Delta_A\alpha+s\alpha_t)wdxdt\\
  	&=T_1(Q)+T_2(Q)+T_3(Q)+T_4(Q)+T_5(Q)+T_6(Q).
  	\end{aligned}
  	\end{equation}
  	
  	We proceed as usually when treating Carleman estimates: we develop each of the terms above, using Green formulas, and we  emphasize the  dominant terms in parameters $s,\lambda$ and powers of $\varphi$ in the coefficients of $|w|^2$ and $\modA{\nabla w}^2$. More precisely, we will  find the dominant term in the integrals on $Q$ containing $|w|^2$ as $s^3\lambda^4\varphi^3|\nabla\psi|^4|w|^2$ and denote by $l.o.t(s^3\lambda^4\varphi^3 |w|^2)$ a term which is bounded in $Q$ by   $C(s^2\lambda^4\varphi^2+s^3\lambda^3\varphi^3 )|w|^2$ for some constant $C>0$ and $s,\lambda>0$ big enough. 
  	The dominant term in $|\nabla w|^2$ will be found to be $s\lambda^2\varphi|\nabla\psi|^2|\nabla w|^2$ and $l.o.t.(s\lambda^2\varphi|\nabla w|^2)$ denotes a term which is bounded in $Q$ by   $C(s\lambda\varphi+\lambda^2 )|\nabla w|^2$ for some constant $C>0$ and $s,\lambda>0$ big enough.

  	\begin{equation}\label{t1}
  	\begin{aligned}
  	T_1(Q)&=-\int_Qw_t\Delta_Awdxdt=\int_Q\langle\nabla w, (\nabla w)_t\rangle_Adxdt-\int_\Sigma\frac{\partial w}{\partial \nu_A}w_td\sigma dt\\
  	&=\frac{1}{2}\int_Q(|\nabla w|^2_A)_tdxdt-\int_\Sigma\frac{\partial w}{\partial \nu_A}w_td\sigma dt =-\int_\Sigma\frac{\partial w}{\partial \nu_A}w_td\sigma dt .\\
  	\end{aligned}
  	\end{equation}
  	
  	\begin{equation}\label{t2}
  	\begin{aligned}
  	T_2(Q)&=-2s\int_Q\Delta_A w\langle\nabla\alpha,\nabla w\rangle_Adxdt\\
  	&=2s\int_Q\langle\nabla w, \nabla\langle\nabla\alpha,\nabla w\rangle_A\rangle_Adxdt-2s\int_\Sigma\frac{\partial w}{\partial \nu_A}\langle\nabla\alpha,\nabla w\rangle_A d\sigma dt\\
  &	=  \int_Qs\sprodA{\nabla \modA{\nabla w}^2}{\nabla\alpha}dxdt +2\int_Q s\lambda^2\varphi|\langle\nabla w, \nabla\psi\rangle_A|^2dxdt        \\
&+\int_Q\lotnw dxdt-2s\int_\Sigma\frac{\partial w}{\partial \nu_A}\langle\nabla\alpha,\nabla w\rangle_A d\sigma dt\\
  	&=-\int_Q s\lambda^2\varphi|\nabla\psi|_A^2|\nabla w|_A^2dxdt+2\int_Q s\lambda^2\varphi|\langle\nabla w, \nabla\psi\rangle_A|^2dxdt\\
  	&+\int_Q\lotnw dxdt+s\int_\Sigma|\nabla w|_A^2\frac{\partial\alpha}{\partial \nu_A}d\sigma dt-2s\int_\Sigma\frac{\partial w}{\partial \nu_A}\langle\nabla\alpha,\nabla w\rangle_Ad\sigma dt.
  	\end{aligned}
  	\end{equation}
  	
  	\begin{equation}\label{t3}
  	\begin{aligned}
  	T_3(Q)&=-2s\int_Q\Delta_A w(\Delta_A\alpha) wdxdt\\
  	&=2s\int_Q\langle\nabla w,\nabla(\Delta_A\alpha w)\rangle_Adxdt-2s\int_\Sigma\frac{\partial w}{\partial \nu_A}\Delta_A\alpha wd\sigma dt\\
  	&=2s\int_Q\Delta_A\alpha|\nabla w|_A^2dxdt+2s\int_Q\langle\nabla w,\nabla(\Delta_A\alpha) w\rangle_Adxdt-2s\int_\Sigma\frac{\partial w}{\partial \nu_A}\Delta_A\alpha wd\sigma dt\\
  	&=2s\lambda^2 \int_Q\varphi\modA{\nabla\psi}^2|\nabla w|_A^2 dxdt+\int_Q\lotwnw  dxdt\\
&  	  -2s\int_\Sigma\frac{\partial w}{\partial \nu_A}\Delta_A\alpha wd\sigma dt,
  	\end{aligned}
  	\end{equation}
  where we have used the estimate
  $$\begin{aligned}
 & |s\langle\nabla w,\nabla(\Delta_A\alpha) w\rangle_A|\le Cs\lambda^3\varphi |w||\nabla w|\\
& \le C(s^2\lambda^4\varphi^2|w|^2+\lambda^2|\nabla w|^2)=\lotwnw.
  \end{aligned}
    $$

  	\begin{equation}\label{t4}
  	\begin{aligned}
  	T_4(Q)&=-\int_Q(s^2|\nabla\alpha|_A^2+s\Delta_A\alpha+s\alpha_t)ww_tdxdt\\
  	&=-\frac{1}{2}\int_Q(s^2|\nabla\alpha|_A^2+s\Delta_A\alpha+s\alpha_t)|w|^2_tdxdt=\frac{1}{2}\int_Q(s^2|\nabla\alpha|_A^2+s\Delta_A\alpha+s\alpha_t)_t|w|^2dxdt\\
  	&=\int_Q\lotw dxdt.
  	\end{aligned}
  	\end{equation}
  	
  	\begin{equation}\label{t5}
  	\begin{aligned}
  	T_5(Q)&=-2s\int_Q\langle\nabla\alpha,\nabla w\rangle_A(s^2|\nabla\alpha|_A^2+s\Delta_A\alpha+s\alpha_t)wdxdt\\
  	&=-s^2\int_Q\langle\nabla\alpha,\nabla |w|^2\rangle_A(s|\nabla\alpha|_A^2+\alpha_t) dxdt \\ 
  	&-\int_Q2s^2\lambda^3\varphi^2\langle\nabla\psi,\nabla w\rangle_A\modA{\nabla\psi}^2w+\lotwnw dxdt \\
  	&=-s^2\int_Q \dvA[{|w|^2(s|\nabla\alpha|_A^2+\alpha_t)\nabla\alpha}]dxdt\\
  	&+s^2\int_Q |w|^2\dvA[{(s|\nabla\alpha|_A^2+\alpha_t)\nabla\alpha}]dxdt\\
  	&-\int_Q2s^2\lambda^3\varphi^2\langle\nabla\psi,\nabla w\rangle_A\modA{\nabla\psi}^2w+\lotwnw dxdt \\
  	&=-s^2\int_\Sigma {|w|^2(s|\nabla\alpha|_A^2+\alpha_t)\dnuA{\alpha}}d\sigma dt\\
  	&+s^3\int_Q |w|^2\dvA{[| \nabla\alpha|_A^2\nabla\alpha]} dxdt+\int_Q\lotw dxdt\\
  	&-\int_Q2s^2\lambda^3\varphi^2\langle\nabla\psi,\nabla w\rangle_A\modA{\nabla\psi}^2w+\lotwnw dxdt \\
  	&=-s^2\int_\Sigma {|w|^2(s|\nabla\alpha|_A^2+\alpha_t)\dnuA{\alpha}}d\sigma dt+3\int_Q s^3\lambda^4\modA{\nabla\psi}^4\varphi^3|w|^2dxdt\\
  	&-\int_Q2s^2\lambda^3\varphi^2\langle\nabla\psi,\nabla w\rangle_A\modA{\nabla\psi}^2wdxdt+\int_Q\lotwnw dxdt.
  	\end{aligned}
  	\end{equation}
  	
  	\begin{equation}\label{t6}
  	\begin{aligned}
  	T_6(Q)&=-2s\int_Q\Delta_A\alpha(s^2|\nabla\alpha|_A^2+s\Delta_A\alpha+s\alpha_t)|w|^2dxdt\\
  	&=-2\int_Q s^3\lambda^4\varphi^3|\nabla\psi|^4_A|w|^2dxdt+ \int_Q \lotw dxdt.
  	\end{aligned}
  	\end{equation}
  	
  	Plugging all these computations in \eqref{estfi} we find
\begin{equation}\label{estfi1}
\begin{aligned}\displaystyle
&\int_Q s^3\lambda^4\modA{\nabla\psi}^4\varphi^3|w|^2  dx dt+\int_Q s\lambda^2 \varphi\modA{\nabla\psi}^2|\nabla w|_A^2dxdt\\
& +\int_Q2 s\lambda^2\varphi|\langle\nabla w, \nabla\psi\rangle_A|^2   dxdt-\int_Q2s^2\lambda^3\varphi^2\langle\nabla\psi,\nabla w\rangle_A\modA{\nabla\psi}^2wdxdt\\
&+\int_Q\lotwnw dxdt\\
&\le \int_Q \bar g e^{2s\alpha}dxdt-\mathcal{B}, 
\end{aligned}
\end{equation}  	
where $\mathcal B$ contains only boundary integrals:
$$
\begin{aligned}
\mathcal B=&-\int_\Sigma\frac{\partial w}{\partial \nu_A}w_td\sigma dt +\left[s\int_\Sigma|\nabla w|_A^2\frac{\partial\alpha}{\partial \nu_A}d\sigma dt-2s\int_\Sigma\frac{\partial w}{\partial \nu_A}\langle\nabla\alpha,\nabla w\rangle_Ad\sigma dt\right]\\
& -2s\int_\Sigma\frac{\partial w}{\partial \nu_A}\Delta_A\alpha wd\sigma dt-s^2\int_\Sigma {|w|^2(s|\nabla\alpha|_A^2+\alpha_t)\dnuA{\alpha}}d\sigma dt\\
&=S_1(\Sigma)+\ldots +S_4(\Sigma)=\mathcal{B}_0+\mathcal{B}_1,
\end{aligned}$$
where we denoted by $\mathcal{B}_0=\sum_{i=1}^4S_i(\Sigma_0),\mathcal{B}_1=\sum_{i=1}^4S_i(\Sigma_1)$  the surface integrals on $\Sigma_0$ respectively on $\Sigma_1$.

Observe that 
$$
 \frac34s^3\lambda^4\modA{\nabla\psi}^4\varphi^3|w|^2   +2 s\lambda^2\varphi|\langle\nabla w, \nabla\psi\rangle_A|^2   -2s^2\lambda^3\varphi^2\langle\nabla\psi,\nabla w\rangle_A\modA{\nabla\psi}^2w>0
$$
and, as $|\nabla\psi|>0$ in $\overline\Omega$, by absorbing the lower order terms in the principal terms we may conclude from \eqref{estfi1} that 
\begin{equation}\label{estfin2}
\begin{aligned}
&\int_Q s^3\lambda^4\varphi^3\modA{\nabla\psi}^4| w|^2+ s\lambda^2\varphi\modA{\nabla\psi}^2|\nabla w|_A^2  dxdt
&\le C\left[\int_Q \bar g e^{2s\alpha}dxdt- \mathcal{ B}\right]
\end{aligned}
\end{equation}
for some fixed constant $C>0$ and all $s,\lambda>0$ big enough.

We denote now by  $\tilde w:=ye^{s\tilde\alpha}$ and we consider the corresponding parabolic  problem satisfied by $\tilde{w}$ which is similar to \eqref{pbw}: 
 
\begin{equation}\label{pbtildaw}\left\lbrace
	\begin{aligned}
		&\tilde w_t-\Delta_A\tilde w+2s\langle\nabla\tilde\alpha,\nabla \tilde w\rangle_A-(s^2|\nabla\tilde\alpha|_A^2-s\Delta_A\tilde\alpha+s\tilde\alpha_t)\tilde w=\g e^{s\tilde \alpha} &\, \text{ in }(0,T)\times\Omega\\
		&\frac{\partial \w}{\partial \nu_A}+\left(\eta(x)-\lambda\tilde\varphi\frac{\partial\tilde\psi}{\partial \nu_A}\right)\w=0  &\, \text{on }(0,T)\times\partial\Omega
	\end{aligned}\right.
\end{equation}
The same procedure as in the case of $w$ provides the following estimate: 
\begin{equation}\label{estfin3}
	\begin{aligned}
		&\int_Q s^3\lambda^4\tilde\varphi^3\modA{\nabla\tilde \psi}^4|\tilde w|^2+ s\lambda^2 \tilde\varphi\modA{\nabla\tilde\psi}^2|\nabla \tilde w|_A^2  dxdt
		&\le C\left[\int_Q \bar g e^{2s\tilde\alpha}dxdt-\mathcal{\tilde B}\right]
	\end{aligned}
\end{equation}
for some fixed constant $C>0$ and all $s,\lambda>0$ big enough, where $\mathcal{\tilde B}$ contains the corresponding boundary integrals:
$$
\begin{aligned}
	\mathcal {\tilde B}=&-\int_\Sigma\frac{\partial \tilde w}{\partial \nu_A}\tilde w_td\sigma dt+\left[s\int_\Sigma|\nabla \tilde w|_A^2\frac{\partial\tilde \alpha}{\partial \nu_A}d\sigma dt-2s\int_\Sigma\frac{\partial \tilde w}{\partial \nu_A}\langle\nabla\tilde \alpha,\nabla \tilde w\rangle_Ad\sigma dt\right]\\
	& -2s\int_\Sigma\frac{\partial \tilde w}{\partial \nu_A}\Delta_A\tilde \alpha \tilde wd\sigma dt\\
	&-s^2\int_\Sigma {|\tilde w|^2(s|\nabla\tilde \alpha|_A^2+\tilde \alpha_t)\dnuA{\tilde \alpha}}d\sigma dt\\
 &=\tilde S_1(\Sigma)+\ldots +\tilde S_4(\Sigma)=\mathcal{\tilde B}_0+\mathcal{\tilde B}_1,
\end{aligned}$$
where we denoted by $\mathcal{\tilde B}_0=\sum_{i=1}^4\tilde S_i(\Sigma_0),\mathcal{\tilde B}_1=\sum_{i=1}^4\tilde S_i(\Sigma_1)$   the surface integrals on $\Sigma_0$ respectively on $\Sigma_1$.

Adding \eqref{estfin2}, \eqref{estfin3} and considering that $w=ye^{s\alpha},\tilde w=ye^{s\tilde \alpha} $ and $\psi\ge\tilde\psi$ we obtain the following estimate for $y$ for some other constant $C>0$ and $s,\lambda>0$ big enough: 
\begin{equation}
\begin{aligned}\label{estfi4}
&\int_Q [s^3\lambda^4\varphi^3 |y|^2+ s\lambda^2\varphi|\nabla y|_A^2 ] e^{2s\alpha}dxdt
&\le C\left[\int_Q \bar g e^{2s\alpha}dxdt- \mathcal{ B}-\mathcal{ \tilde B}\right]
\end{aligned}
\end{equation}
	
  At this point we will focus on the boundary terms, keeping in mind the following properties on $\Sigma_0, \Sigma_1$:

 \begin{equation}\label{psiuri}
 \begin{aligned}
 &\bullet \nabla \psi=-\nabla \tilde\psi, \Delta_A \psi=-\Delta_A\psi \text{ in  }\Omega\\
 &\bullet\,\nabla\psi|_{\Gamma_0}=c(x)\nu(x), \nabla\tilde\psi|_{\Gamma_0}=-c(x)\nu(x)\quad \text{ with } c(x)<0, x\in\Gamma_0;\\
&\bullet\,\dfrac{\partial\psi}{\partial \nu_A}|_{\Gamma_0}< 0, \quad \dfrac{\partial\tilde{\psi}}{\partial \nu_A}|_{\Gamma_0}=-\dfrac{\partial\psi}{\partial \nu_A}|_{\Gamma_0}> 0;\\
&\bullet\,\varphi=\tilde{\varphi}, \alpha=\tilde{\alpha} \text{ on }\Sigma_0;\\
 &\bullet\, \nabla\psi|_{\Gamma_1}=c(x)\nu(x),\nabla\tilde\psi|_{\Gamma_1}=-c(x)\nu(x) \quad \text{ with } c(x)>0, x\in\Gamma_1;\\
 &\bullet\, \dfrac{\partial\psi}{\partial \nu_A}|_{\Gamma_1}> 0, \quad \dfrac{\partial\tilde{\psi}}{\partial \nu_A}|_{\Gamma_1}=-\dfrac{\partial\psi}{\partial \nu_A}|_{\Gamma_1}< 0;\\
&\bullet\,\varphi>\tilde{\varphi}, \alpha>\tilde{\alpha} \text{ on }\Sigma_1.
 \end{aligned}
 \end{equation}
  
  The homogeneous boundary condition $\dfrac{\partial y}{\partial \nu_A}+\eta(x)y=0$ holds on the entire  $\Sigma$ and we compute and estimate now $\mathcal{B}+\mathcal{ \tilde B}$ by computing each sum of the form $S_i(\Sigma_0)+\tilde S_i(\Sigma_0)$ and, respectively, $S_i(\Sigma_1)+\tilde S_i(\Sigma_1)$.
  
  \smallskip
  
\noindent$\bullet$ $S_1(\Sigma)+\tilde S_1(\Sigma)$:
  \begin{equation}\label{t1sigma}
  \begin{aligned}
  S_1(\Sigma)&=-\int\limits_{\Sigma}w_t\frac{\partial w}{\partial \nu_A}d\sigma dt=-\int\limits_{\Sigma}(y_t+s\alpha_t y)\left(\frac{\partial y}{\partial \nu_A}+s\lambda\varphi y\frac{\partial \psi}{\partial \nu_A}\right)e^{2s\alpha}d\sigma dt\\
  &=-\int\limits_{\Sigma}(y_t+s\alpha_t y)\left(-\eta(x)y+s\lambda\varphi y\frac{\partial \psi}{\partial \nu_A}\right)e^{2s\alpha}d\sigma dt\\
  &=-\int\limits_{\Sigma}\left(\frac{|y|^2}{2}\right)_t\left(-\eta(x)+s\lambda\frac{\partial \psi}{\partial \nu_A}\varphi\right)e^{2s\alpha}d\sigma dt-\int_\Sigma |y|^2s\alpha_t \left(-\eta(x)+s\lambda\frac{\partial \psi}{\partial \nu_A}\varphi\right)e^{2s\alpha}d\sigma dt\\
  &=\int\limits_{\Sigma}\frac{|y|^2}{2}\left[s\lambda\frac{\partial \psi}{\partial \nu_A}\varphi_t +2s\alpha_t\left(-\eta(x)+s\lambda\frac{\partial \psi}{\partial \nu_A}\varphi\right)\right]e^{2s\alpha}d\sigma dt\\&
  -\int_\Sigma |y|^2s\alpha_t \left(-\eta(x)+s\lambda\frac{\partial \psi}{\partial \nu_A}\varphi\right)e^{2s\alpha}d\sigma dt\\
&=\int\limits_{\Sigma}\frac{|y|^2}{2}s\lambda\frac{\partial \psi}{\partial \nu_A}\varphi_te^{2s\alpha}d\sigma d t
      \end{aligned}
  \end{equation}
  Correspondingly,
  \begin{equation}\label{t1tsigma}
  \tilde S_1(\Sigma)=\int\limits_{\Sigma}\frac{|y|^2}{2}s\lambda\frac{\partial\tilde \psi}{\partial \nu_A}\tilde \varphi_te^{2s\tilde \alpha}d\sigma d t
  \end{equation}
  Considering \eqref{psiuri} we have
  \begin{equation}\label{s01}
  S_1(\Sigma_0)+\tilde S_1(\Sigma_0)=0
  \end{equation}
  and
  \begin{equation}\label{s11}
 | S_1(\Sigma_1)+\tilde S_1(\Sigma_1)|\le C\int\limits_{\Sigma_1}s\lambda\varphi^2e^{2s\alpha}|\zeta(y)|^2d\sigma dt.
  \end{equation}
  
  \noindent$\bullet$ $S_2(\Sigma)+\tilde S_2(\Sigma)$:
  \begin{equation}\label{t2sigma}
  \begin{aligned}
  S_2(\Sigma)&=s\int\limits_\Sigma\frac{\partial\alpha}{\partial \nu_A}|\nabla w|_A^2d\sigma dt-2s\int\limits_\Sigma\frac{\partial w}{\partial \nu_A}\langle\nabla \alpha,\nabla w\rangle_Ad\sigma dt\\
 &=\int\limits_\Sigma s\lambda\varphi|\nabla w|_A^2\frac{\partial\psi}{\partial \nu_A}d\sigma dt-2\int\limits_\Sigma s\lambda\varphi\langle\nabla \psi,\nabla w\rangle_A \frac{\partial w}{\partial \nu_A}d\sigma dt\\ 
 &=\int\limits_\Sigma s\lambda\varphi(|\nabla y|_A^2+s^2\lambda^2\varphi^2|\nabla \psi|_A^2y^2+2s\lambda\varphi\langle\nabla \psi,\nabla y\rangle_A y)\frac{\partial\psi}{\partial \nu_A}e^{2s\alpha}d\sigma dt\\
 &-2\int\limits_\Sigma s\lambda\varphi (\langle\nabla \psi,\nabla y\rangle_A+s\lambda\varphi|\nabla \psi|_A^2y)\left( \frac{\partial y}{\partial \nu_A}+s\lambda\varphi\frac{\partial\psi}{\partial \nu_A}y\right)e^{2s\alpha}d\sigma dt\\
 &=\int\limits_\Sigma s\lambda\varphi|\nabla y|_A^2\frac{\partial\psi}{\partial \nu_A}e^{2s\alpha}d\sigma dt-2\int\limits_\Sigma s\lambda\varphi\langle\nabla \psi,\nabla y\rangle_A\frac{\partial y}{\partial \nu_A}e^{2s\alpha}d\sigma dt\\
 &-2\int\limits_\Sigma s^2\lambda^2\varphi^2|\nabla \psi|_A^2y\frac{\partial y}{\partial \nu_A}e^{2s\alpha}d\sigma dt-\int\limits_\Sigma s^3\lambda^3\varphi^3|\nabla \psi|_A^2y^2\frac{\partial \psi}{\partial \nu_A}e^{2s\alpha}d\sigma dt\\
 &=\int\limits_\Sigma s\lambda\varphi c(x)\modA\nu^2|\nabla y|_A^2e^{2s\alpha}d\sigma dt-2\int\limits_\Sigma s\lambda\varphi c(x)\left| \dnuA{y}\right|^2e^{2s\alpha}d\sigma dt\\
  &-2\int\limits_\Sigma s^2\lambda^2\varphi^2 c(x)^2\modA{\nu}^2y\frac{\partial y}{\partial \nu_A}e^{2s\alpha}d\sigma dt-\int\limits_\Sigma s^3\lambda^3\varphi^3c(x)^3\modA{\nu}^4|y|^2e^{2s\alpha}d\sigma dt.\\
  \end{aligned}
  \end{equation}
  Correspondingly,
  \begin{equation}\label{t2tsigma}
    \begin{aligned}
    \tilde S_2(\Sigma)&=\int\limits_\Sigma s\lambda\tilde \varphi|\nabla y|_A^2\frac{\partial\tilde \psi}{\partial \nu_A}e^{2s\tilde \alpha}d\sigma dt-2\int\limits_\Sigma s\lambda\tilde \varphi\langle\nabla\tilde  \psi,\nabla y\rangle_A\frac{\partial y}{\partial \nu_A}e^{2s\tilde \alpha}d\sigma dt\\
   &-2\int\limits_\Sigma s^2\lambda^2\tilde \varphi^2|\nabla\tilde  \psi|_A^2y\frac{\partial y}{\partial \nu_A}e^{2s\tilde \alpha}d\sigma dt-\int\limits_\Sigma s^3\lambda^3\tilde \varphi^3|\nabla \tilde \psi|_A^2y^2\frac{\partial\tilde  \psi}{\partial \nu_A}e^{2s\tilde \alpha}d\sigma dt\\
    &=-\int\limits_\Sigma s\lambda\tilde \varphi c(x)\modA\nu^2|\nabla y|_A^2e^{2s\tilde \alpha}d\sigma dt+2\int\limits_\Sigma s\lambda\tilde \varphi c(x)\left| \dnuA{y}\right|^2e^{2s\tilde \alpha}d\sigma dt\\
     &-2\int\limits_\Sigma s^2\lambda^2\tilde \varphi^2 c(x)^2\modA{\nu}^2y\frac{\partial y}{\partial \nu_A}e^{2s\tilde \alpha}d\sigma dt+\int\limits_\Sigma s^3\lambda^3\tilde \varphi^3c(x)^3\modA{\nu}^4|y|^2e^{2s\tilde \alpha}d\sigma dt.
    \end{aligned}
    \end{equation}
  Considering \eqref{psiuri} we have
  \begin{equation}\label{s02}\begin{aligned}
  S_2(\Sigma_0)+\tilde S_2(\Sigma_0)&=-4\int\limits_{\Sigma_0 }s^2\lambda^2\varphi^2 c(x)^2\modA{\nu}^2y\frac{\partial y}{\partial \nu_A}e^{2s\alpha}d\sigma dt\\&=4\int\limits_{\Sigma_0} s^2\lambda^2\varphi^2 c(x)^2\modA{\nu}^2\eta(x)|y|^2e^{2s\alpha}d\sigma dt,
  \end{aligned}
  \end{equation}
  and
  \begin{equation}\label{s12}
  |S_2(\Sigma_1)+\tilde S_2(\Sigma_1)|\le C \int\limits_{\Sigma_1} s^3\lambda^3\varphi^3|\zeta(y)|^2e^{2s\alpha}d\sigma dt.
  \end{equation}

  \noindent$\bullet$ $S_3(\Sigma)+\tilde S_3(\Sigma)$:
  \begin{equation}
  \label{t3sigma}
  \begin{aligned}
  S_3(\Sigma)&=-2s\int\limits_{\Sigma} \Delta_A\alpha\frac{\partial w}{\partial \nu_A}wd\sigma dt\\
   &=-2s\int\limits_{\Sigma} (\lambda^2\varphi|\nabla\psi|^2_A+\lambda\varphi\Delta_A\psi)\left(\frac{\partial y}{\partial \nu_A}+s\lambda\varphi y\frac{\partial\psi}{\partial \nu_A}\right)ye^{2s\alpha}d\sigma dt\\
     &=-2\int\limits_{\Sigma} (s\lambda^2\varphi|\nabla\psi|^2_A+s\lambda\varphi\Delta_A\psi)\frac{\partial y}{\partial \nu_A}ye^{2s\alpha}d\sigma dt-\\
      &-2\int\limits_{\Sigma} (s^2\lambda^3\varphi^2|\nabla\psi|^2_A+s^2\lambda^2\varphi^2\Delta_A\psi) \frac{\partial\psi}{\partial \nu_A}|y|^2e^{2s\alpha}d\sigma dt\\
      &=2\int\limits_{\Sigma} (s\lambda^2\varphi c(x)^2\modA{\nu}^2+s\lambda\varphi\Delta_A\psi)\eta(x)|y|^2e^{2s\alpha}d\sigma dt-\\
            &-2\int\limits_{\Sigma} (s^2\lambda^3\varphi^2c(x)^2\modA{\nu}^2+s^2\lambda^2\varphi^2\Delta_A\psi) c(x)\modA{\nu}^2|y|^2e^{2s\alpha}d\sigma dt.\\
  \end{aligned}
  \end{equation}
  Correspondingly,
  \begin{equation}
  \label{t3tsigma}
  \begin{aligned}
  \tilde S_3(\Sigma)&=-2\int\limits_{\Sigma} (s\lambda^2\tilde \varphi|\nabla\tilde \psi|^2_A+s\lambda\tilde \varphi\Delta_A\tilde \psi)\frac{\partial y}{\partial \nu_A}ye^{2s\tilde \alpha}d\sigma dt\\
        &-2\int\limits_{\Sigma} (s^2\lambda^3\tilde \varphi^2|\nabla\tilde \psi|^2_A+s^2\lambda^2\tilde \varphi^2\Delta_A\tilde \psi) \frac{\partial\tilde \psi}{\partial \nu_A}|y|^2e^{2s\tilde \alpha}d\sigma dt\\
        &=2\int\limits_{\Sigma} (s\lambda^2\tilde \varphi c(x)^2\modA{\nu}^2-s\lambda\tilde \varphi\Delta_A \psi)\eta(x)|y|^2e^{2s\alpha}d\sigma dt\\
              &+2\int\limits_{\Sigma} (s^2\lambda^3\tilde \varphi^2c(x)^2\modA{\nu}^2-s^2\lambda^2\tilde \varphi^2\Delta_A\psi) c(x)\modA{\nu}^2|y|^2e^{2s\alpha}d\sigma dt\\
  \end{aligned}
  \end{equation}
  Considering \eqref{psiuri} we find
  \begin{equation}
  \begin{aligned}\label{s03}
  S_3(\Sigma_0)+\tilde S_3(\Sigma_0)&= 4\int\limits_{\Sigma_0} s\lambda^2\tilde \varphi c(x)^2\modA{\nu}^2\eta(x)|y|^2e^{2s\alpha}d\sigma dt\\
  &  -4\int\limits_{\Sigma_0}  s^2\lambda^2\varphi^2\Delta_A\psi c(x)\modA{\nu}^2|y|^2e^{2s\alpha}d\sigma dt
  \end{aligned}
  \end{equation}
  and 
   \begin{equation}\label{s13}
    |S_3(\Sigma_1)+\tilde S_3(\Sigma_1)|\le C \int\limits_{\Sigma_1} s^2\lambda^3\varphi^2|\zeta(y)|^2e^{2s\alpha}d\sigma dt.
    \end{equation}

  \noindent$\bullet$ $S_4(\Sigma)+\tilde S_4(\Sigma)$:

  \begin{equation}\label{t4sigma}
  \begin{aligned}
  S_4(\Sigma)&=-s^2\int_{\Sigma}(s|\nabla\alpha|_A^2 +\alpha_t)\lambda\varphi\frac{\partial\psi}{\partial \nu_A}|w|^2d\sigma dt\\
  &=-\int_{\Sigma}\left[s^3\lambda^3\varphi^3|\nabla\psi|_A^2  +s^2\lambda\alpha_t\varphi \right]|y|^2e^{2s\alpha}c(x)\modA{\nu}^2d\sigma dt.
  \end{aligned}
  \end{equation}
  
  Correspondingly,
  \begin{equation}\label{t4tsigma}
    \begin{aligned}
   \tilde  S_4(\Sigma)&=-s^2\int_{\Sigma}(s|\nabla\tilde \alpha|_A^2 +\tilde \alpha_t)\lambda\tilde \varphi\frac{\partial\tilde \psi}{\partial \nu_A}|w|^2d\sigma dt\\
    &=\int_{\Sigma}\left[s^3\lambda^3\tilde \varphi^3|\nabla\psi|_A^2  +s^2\lambda\tilde \alpha_t\tilde \varphi \right]|y|^2e^{2s\tilde \alpha}c(x)\modA{\nu}^2d\sigma dt.
    \end{aligned}
    \end{equation}
  Taking into account \eqref{psiuri} we find
  
  \begin{equation}
    \begin{aligned}\label{s04}
    S_4(\Sigma_0)+\tilde S_4(\Sigma_0)&= 0\end{aligned}
    \end{equation}
    and 
     \begin{equation}\label{s14}
      |S_4(\Sigma_1)+\tilde S_4(\Sigma_1)|\le C \int\limits_{\Sigma_1} s^3\lambda^3\varphi^3|\zeta(y)|^2e^{2s\alpha}d\sigma dt.
      \end{equation}
      
   Plugging \eqref{s01},\eqref{s11},\eqref{s02},\eqref{s12},\eqref{s03},\eqref{s13},\eqref{s04},\eqref{s14} into \eqref{estfi4} and considering also that $\Delta_A\psi>0$ in $\overline\Omega$ (by Remark \ref{remsubharmpsi}) we find that
    
    \begin{equation}
    \begin{aligned}\label{estfi5}
    &\int_Q [s^3\lambda^4\varphi^3 |y|^2+ s\lambda^2\varphi|\nabla y|_A^2 ] e^{2s\alpha}dxdt+ \int\limits_{\Sigma_0}  s^2\lambda^2\varphi^2 |y|^2e^{2s\alpha}d\sigma dt\\
    &\le C\left[\int_Q \bar g e^{2s\alpha}dxdt+\int\limits_{\Sigma_1}s^3\lambda^3\varphi^3|\zeta(y)|^2e^{2s\alpha}d\sigma dt\right],
    \end{aligned}
    \end{equation}
    which concludes the proof.
  \end{proof}
  
 \begin{remark}
 One may easily obtain Carleman estimates with boundary observations for the general system \eqref{sysinitial}, by combining Carleman estimates established in Proposition \ref{lemaCarlemanclassic} for each equation, with the usual mechanism of  absorbing the lower order terms in the free terms and applying Cauchy inequality.
 \end{remark}

\section{Source stability estimates}\label{sec:stability}
Our approach to obtain source stability estimates uses, through an argument by contradiction,  the following auxiliary result based essentially on Carleman estimates with boundary observations.

Consider a family of problems of type \eqref{pbparabmax}

\begin{equation}\label{pbparabmax-m}
     \begin{cases}
     D_ty+ {L}^{(m)}y=g  &\text{ in }Q\\
     \displaystyle\beta(x)\frac{\partial y}{\partial \nu_{A}}+\eta(x)y=0  &\text{ on }(0,T)\times\partial\Omega,
     \end{cases}
     \end{equation} with corresponding elliptic parts   
     \begin{equation}\label{opparabdiv-m}
        L^(m)u=-\sum_{j,k=1}^N D_j(a^{jk}(x) D_ku)+\sum\limits_{k=1}^Nb^{k}(x) D_ku+c^{(m)}(t,x)u,
        \end{equation}
        and coefficients satisfying the same assumptions as operator \eqref{opparabdiv2}
\begin{lemma}\label{convergente} Assume that the zero order coefficients $c^{(m)}$ satisfy an uniform $L^\infty$ bound: there exists $M>0$ such that
$$
\|c^{(m)}\|_{L^\infty(Q)}\le M.
$$
Consider $(y^{(m)})_m$ a   sequence of (variational) solutions for problem \eqref{pbparabmax-m} with corresponding sources $(g^{(m)})_m\subset L^2(Q)$. If   $(g^{(m)})_m$ is bounded in $L^2(Q)$  and $(\zeta(y^{(m)}))_m$ is bounded in $L^2(\Sigma_1)$, then there exist subsequences also denoted by $(y^{(m)})_m, (g^{(m)})_m$  
and $y\in L^2_{loc}(0,T; H^{1}(\Omega))$, $g\in L^2(Q)$ such that   for $\epsilon>0$ small  
\begin{eqnarray}
\label{c1}c^{(m)} \rightharpoonup c,\text{ weakly-* in } L^\infty(Q);\\
\label{c2}g^{(m)} \rightharpoonup g\,\text{ weakly in } L^2(\epsilon, T-\epsilon; L^2(\Omega));\\
\label{c3}y^{(m)} \rightharpoonup y\,\text{ weakly in } L^2(\epsilon, T-\epsilon; H^{1}(\Omega));\\
\label{c4}y^{(m)}\rightarrow y \,\text{ strongly in } L^2(\epsilon, T-\epsilon; L^{2}(\Omega)).
\end{eqnarray}
Moreover, $y$ is a variational solution corresponding to the source $g$.
\end{lemma}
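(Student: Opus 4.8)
The plan is to manufacture a uniform interior bound on the solutions $y^{(m)}$ out of the purely boundary data at our disposal, using the Carleman estimate; then to extract convergent subsequences by weak/weak-$*$ compactness reinforced by an Aubin--Lions argument; and finally to pass to the limit in the variational formulation. The delicate point at the outset is that the hypotheses bound $y^{(m)}$ only through $\zeta(y^{(m)})$ on $\Sigma_1$ and through the source $g^{(m)}$, with no a priori control of $y^{(m)}$ in the interior of $Q$.

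First I would rewrite \eqref{pbparabmax-m} as $D_t y^{(m)}-\Delta_A y^{(m)}=g^{(m)}-\sum_k b^{k}D_k y^{(m)}-c^{(m)}y^{(m)}=:\bar g^{(m)}$ and apply the boundary-observation Carleman estimate of Proposition \ref{lemaCarlemanclassic} (these solutions, by parabolic smoothing for $L^2$ sources, possess the regularity required on every interior time-slab). Substituting $\bar g^{(m)}$ into \eqref{classicalCarleman} and using $\|c^{(m)}\|_{L^\infty(Q)}\le M$ together with the boundedness of $\mathbf b$, the contributions $\int_Q|\nabla y^{(m)}|^2 e^{2s\alpha}$ and $\int_Q|y^{(m)}|^2 e^{2s\alpha}$ produced by the lower-order terms are absorbed into the dominant left-hand side for $s,\lambda$ large, exactly as in the remark following Proposition \ref{lemaCarlemanclassic}. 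With a constant independent of $m$ this gives
\[\int_Q\big[s\lambda^2\varphi|\nabla y^{(m)}|^2+s^3\lambda^4\varphi^3|y^{(m)}|^2\big]e^{2s\alpha}\,dx\,dt\le C\Big(\|g^{(m)}\|_{L^2(Q)}^2+\|\zeta(y^{(m)})\|_{L^2(\Sigma_1)}^2\Big).\]
Since $\varphi$ and $e^{2s\alpha}$ are bounded above and below by positive constants on each slab $[\epsilon,T-\epsilon]$, this yields a bound on $(y^{(m)})_m$ in $L^2(\epsilon,T-\epsilon;H^1(\Omega))$, hence in $L^2(\epsilon,T-\epsilon;V)$ since $y^{(m)}(t)\in V$, uniform in $m$.

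Next I would secure the strong convergence. From the variational form, $D_t y^{(m)}=g^{(m)}-\A^{(m)}y^{(m)}$ in $V^*$, where $\A^{(m)}$ is the operator associated to $\tilde a^{(m)}(t,u,v)=a(u,v)+\int_\Omega c^{(m)}uv\,dx$ and satisfies $\|\A^{(m)}u\|_{V^*}\le C\|u\|_V$ with $C$ uniform in $m$; hence $(D_t y^{(m)})_m$ is bounded in $L^2(\epsilon,T-\epsilon;V^*)$. As $V\hookrightarrow H=L^2(\Omega)$ compactly (Rellich, $\Omega$ bounded and smooth) and $H\hookrightarrow V^*$ continuously, the Aubin--Lions lemma gives relative compactness of $(y^{(m)})_m$ in $L^2(\epsilon,T-\epsilon;H)$, i.e. the strong convergence \eqref{c4}. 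Combining this with the weak-$*$ compactness of $(c^{(m)})_m$ in $L^\infty(Q)=(L^1(Q))^*$ (Banach--Alaoglu) and the weak compactness of $(g^{(m)})_m$ and $(y^{(m)})_m$ in the Hilbert spaces $L^2(Q)$ and $L^2(\epsilon,T-\epsilon;H^1(\Omega))$, a diagonal extraction over $\epsilon_k\downarrow 0$ produces a single subsequence realizing \eqref{c1}--\eqref{c4}.

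Finally I would pass to the limit in the weak formulation tested against $v\in V$ and $\phi\in C_c^\infty(\epsilon,T-\epsilon)$:
\[-\int_{\epsilon}^{T-\epsilon}\langle y^{(m)},v\rangle_H\,\phi'\,dt+\int_{\epsilon}^{T-\epsilon}\Big(a(y^{(m)},v)+\int_\Omega c^{(m)}y^{(m)}v\,dx\Big)\phi\,dt=\int_{\epsilon}^{T-\epsilon}\langle g^{(m)},v\rangle_H\,\phi\,dt.\]
The first and last terms converge by \eqref{c4} and \eqref{c2}; the form term converges by \eqref{c3}, since $u\mapsto\int_{\epsilon}^{T-\epsilon}a(u,v)\phi\,dt$ is a bounded linear functional on $L^2(\epsilon,T-\epsilon;H^1(\Omega))$. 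The only genuinely coupled term is $\int_\Omega c^{(m)}y^{(m)}v\,dx$, which I split as $\int_\Omega c^{(m)}(y^{(m)}-y)v\,dx+\int_\Omega(c^{(m)}-c)\,y\,v\,dx$: the first piece is bounded by $M\|y^{(m)}-y\|_{L^2}\|v\|_{L^2}\to 0$ by \eqref{c4}, the second tends to $0$ by \eqref{c1} tested against the fixed $L^1$ function $yv\phi$. The limit is the variational identity for $y$ with source $g$ and coefficient $c$ on each $(\epsilon,T-\epsilon)$, hence on $(0,T)$, so $y$ is a variational solution corresponding to $g$. The main obstacle is the very first step: extracting an interior estimate from purely boundary observations, which is precisely what the boundary-observation Carleman estimate supplies; the coupling term $c^{(m)}y^{(m)}$ is the secondary difficulty and is exactly why the Aubin--Lions strong convergence, rather than mere weak convergence of $(y^{(m)})_m$, is indispensable.
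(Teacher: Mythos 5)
Your proposal is correct and follows essentially the same route as the paper: the boundary-observation Carleman estimate supplies the uniform $L^2_{loc}(0,T;H^1(\Omega))$ bound, weak/weak-$*$ compactness and Aubin--Lions give the convergences \eqref{c1}--\eqref{c4}, and one passes to the limit in the variational identity. The only (harmless) deviation is in the Aubin--Lions triple: you use the bound on $D_t y^{(m)}$ in $L^2_{loc}(0,T;V^*)$ together with the $V$-bound and the compact embedding $V\hookrightarrow H$, whereas the paper additionally invokes parabolic regularity to bound $y^{(m)}$ in $L^2_{loc}(0,T;D(\A^\flat))$; your version is slightly more economical and equally valid.
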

\begin{proof}
Considering the functional framework given by the Hilbert spaces introduced in \eqref{Vspace} consider $a:V\times V\rightarrow \R$, 
 \begin{equation}\label{a-form-m}
   a(u,v)=\int_\Omega\langle A(x)\nabla u, \nabla v\rangle+\langle\textbf{b}, \nabla u\rangle v dx+\int_{\Gamma_R}\eta(x) u v d\sigma.
   \end{equation}
    The variational solutions $y^{(m)}$ corresponding to sources $g^{(m)}$ belong to $C([0,T];H)\cap L^2(0,T;V)$ and satisfy
  \begin{eqnarray}
  \langle y^{(m)}(t_2),v\rangle_H-\langle y^{(m)}(t_1),v\rangle_H+\int_{t_1}^{t_2} a(  y^{(m)}(\tau),v)+\langle c^{(m)}(\tau,\cdot)y^{(m)}(\tau),v\rangle_H d\tau\nonumber\\
  =\int_{t_1}^{t_2}\langle g^{(m)}(\tau),v\rangle_Hd\tau,\quad\forall 0<t_1<t_2<T,\forall v\in V.\label{varsol-m}
  \end{eqnarray}
  The boundedness of $(g^{(m)})_m$ in $L^2(Q)$  and of $c^{(m)}$ in $L^\infty(Q)$ allows to extract subsequences such that \eqref{c1} and \eqref{c2} hold.
  
The boundedness of $(g^{(m)})_m$ in $L^2(Q)$ and of $(\zeta(y^{(m)}))_m$ in $L^2(\Sigma_1)$ assures, by Carleman estimate for the solution $y^{(m)}$ of \eqref{pbparabmax-m} corresponding to the sources $g^{(m)}$, that $(y^{(m)})_m$ is bounded in
$L^2_{loc}(0,T;H^1(\Omega))$ which is in fact boundedness in $L^2_{loc}(0,T;V)$, meaning boundedness  in $L^2(\epsilon, T-\epsilon;V),\,\forall 0<\epsilon<T/2$. So, we may further extract subsequences such that \eqref{c3} holds.

Parabolic regularity results  ensure boundedness of $y^{(m)}$ in $L^2_{loc}(0,T; D(\A^\flat))$ and of $D_ty^{(m)}$ in  $L^2_{loc}(0,T; V^*)$. Consequently, by Aubin-Lions Lemma we may further extract  subsequences such that \eqref{c4} is satisfied.

We have now all necessary convergences to pass to the limit in \eqref{varsol-m} and conclude that $y$ is a variational solution corresponding to the source $g$.
 
\end{proof}

\subsubsection*{Linear systems. Proof of Theorem \ref{thlinear}.}

We focus now on proving  source estimates for the linear parabolic systems \eqref{sysinitial} with sources $\textbf{g}$ belonging to  $\mathcal{G}_{k}$.
We have to prove that for $k,M>0$ there exists $C=C(k,M)$ such that for $\textbf{g}\in\mathcal{G}_{k}$, and zero-order coefficients $\textbf{c}=(c_{il})_{il}$ satisfying $\|\textbf{c}\|_{L^\infty(Q)}\le M$ there exists $C=C(k,M)$ such that 
\begin{equation}
\begin{aligned}
&\left\|\textbf{g}\right\|_{L^2(Q)}\leq C\|\zeta(\textbf{y})\|_{L^2(\Sigma_1)}.
\end{aligned}
\end{equation}
 We argue by contradiction. There exist thus $k,M>0$ and   sequences $(\textbf{g}^{(m)})_m\subset \mathcal{G}_{k} $,  zero order coefficients $\textbf{c}^{(m)}=(c^{(m)}_{il})_{il}$ satisfying $\|\textbf{c}^{(m)}\|_{L^\infty(Q)}\le M$ and  solutions $y^{(m)}$  corresponding to the sources and system \eqref{sysinitial} (with $\textbf{c}^{(m)}$ as zero order coefficients) such that 
 \begin{equation}\label{contrgzeta}
\|\textbf{g}^{(m)}\|_{L^2(Q)}>m\|\zeta(\textbf{y}^{(m)})\|_{L^2(\Sigma_1)}.
\end{equation}
With no loss of generality we may suppose that $\left\|\textbf{g}^{(m)}\right\|_{L^2(Q)}=1$. Apply now Lemma \ref{convergente} addapted to systems, extract  subsequences such that for all $0<\epsilon<T/2$, similar convergences to \eqref{c1}-\eqref{c4} hold:
\begin{eqnarray}
\label{c1s}\textbf{c}^{(m)} \rightharpoonup \textbf{c},\text{ weakly-* in } [L^\infty(Q)]^n;\\
\label{c2s}\textbf{g}^{(m)} \rightharpoonup \textbf{g}\,\text{ weakly in } L^2(0, T; [L^2(\Omega)]^n);\\
\label{c3s}\textbf{y}^{(m)} \rightharpoonup \textbf{y}\,\text{ weakly in } L^2(\epsilon, T-\epsilon; [H^{1}(\Omega)]^n);\\
\label{c4s}\textbf{y}^{(m)}\rightarrow \textbf{{y}} \,\text{ strongly in } L^2(\epsilon, T-\epsilon; [L^{2}(\Omega)]^n).
\end{eqnarray}
Moreover, $\textbf{y}$ is variational solution to \eqref{sysinitial} with source $\textbf{g}$ and zero-order coefficients $\textbf{c}=(c_{il})_{il}$.

By \eqref{contrgzeta} we obtain that 
$$
\mathbf{\zeta}_i(\textbf{y})=0.\forall i,
$$
and by the compatibility conditions in (H4), since $\textbf{y}\in L^2(0,T;[H^2(\Omega)]^n)$ we find that 
\begin{equation}\label{posext}
y_i(t,\cdot)=0\text{ and }\frac\partial{\partial\nu}y_i(t,\cdot)=0 \text{ on } \Gamma_1,\, a.e. t\in (0,T).
\end{equation}

We observe now that the weak limit $\textbf{g}$ of $(\textbf{g}^{(m)})_m$ is not zero. Indeed, by hypothesis $\textbf{g}^{(m)}\in\mathcal{G}_{k}$ and thus, as  $1\in L^{2}(Q)$, $g_i\ge0$ and weak convergence \eqref{c2s} we have
 $$1=\|\textbf{g}^{(m)}\|_{L^2(Q)}\le k\|\textbf{g}^{(m)}\|_{L^1(Q)}=k\sum_i\int_{Q}g_i^{(m)}\rightarrow\sum_i\int_{Q}g_i =k\|\textbf{g}\|_{L^1(Q)}.$$
 Consequently,
 \begin{equation}\label{pozg}
 \textbf{g}\not\equiv 0 \text{ in } Q \text{ and }\textbf{g}\ge0.
 \end{equation}
Consider now  a slighter larger domain $\tilde\Omega$ extending $\Omega$ in the normal directon to $\Gamma_1$:

$$
\tilde\Omega=\Omega\cup\{x+\tau\nu(x)|x\in\Gamma_1,0\le\tau<\varepsilon\},
$$
for some $\varepsilon>0$ small enough such that $\tilde\Omega$ has smooth boundary,
$$
\partial\tilde\Omega=\Gamma_0\cup\tilde\Gamma_1,\,\tilde\Gamma_1=\{x+\varepsilon\nu(x)|x\in\Gamma_1\}
$$
Extend now the coefficients $a_i^{jk},b_i $ and $c_{il}$ of the operators $L_i$ to $\tilde\Omega$ and respectively $(0,T)\times\tilde\Omega$ to corresponding functions $\tilde a_i^{jk},\tilde b_i $ and $\tilde c_{il}$ such that the new coefficients satisfy the same hypotheses associated to problem \eqref{sysinitial} and also $\tilde c_{il}\le0$ in $(0,T)\times\tilde\Omega$. Consider now the parabolic system:

\begin{equation}\label{sysinitial-3}
   \left\lbrace
   \begin{array}{ll}
   D_tz_i-\sum\limits_{j,k=1}^N D_j(\tilde a_i^{jk} D_kz_i)+\sum\limits_{{\substack{k=\overline{1,N}}}}\tilde b_i^{k} (x)D_kz_i+\sum\limits_{l=\overline{1,n}}\tilde c_{il}(t,x) z_l=\chi_\Omega g_i, &\text{ in }(0,T)\times\Omega,
   \\
   \displaystyle\beta_i(x)\frac{\partial z_i}{\partial \nu_{A_i}}+\eta_i(x)z_i=0 ,  &\text{ on }(0,T)\times\Gamma_0,\\
   z_i=0 &\text{ on }(0,T)\times\tilde\Gamma_1
   \end{array}
   \right.\end{equation} 
 where we denoted by $\chi_\Omega w$ the extension with zero to $\tilde\Omega$ of a function $w$ defined on $\Omega$.

We observe now that $\eqref{posext}$ allows to say that $\textbf{z}=\chi_\Omega \textbf{y}$ is a variational solution to \eqref{sysinitial-3} and $\chi_\Omega \textbf{y}\equiv 0$ in $(0,T)\times(\tilde\Omega\setminus\Omega)$. But as observed in \eqref{pozg}, $\textbf{g}\ge0$ and $\textbf{g}\not\equiv0$ which means that $ \chi_\Omega\textbf{g}\not\equiv0$. But  this is a contradiction by positivity improving properties established in Theorem \ref{strmaxsys}.
\fin

\subsubsection*{Semilinear reaction-diffusion systems. Proof of Theorem \ref{thnonlinear}}
Consider first in  (H3)  the first assumption $f_i(t,x,y_1,\ldots,y_{i-1},0,y_{i+1},\dots, y_n)=0,  i =\overline{1,n},   t>0,x\in\overline\Omega,\, \textbf{y}\ge0.$ In this case, for  $\textbf{y}\in\mathcal{F}_M$ a solution to \eqref{rd-sys} we observe that, by a linearization mechanism,  $\textbf{y}$ is solution to the "linearized" system
\begin{equation}\label{rd-sys-lin}
\left\lbrace
\begin{array}{ll}
D_ty_i-\sum\limits_{j,k=1}^N D_j(a_i^{jk} D_ky_i)+\sum\limits_{k=1}^N b_i^k D_ky_i+c^{\textbf{y}}_i(t,x)y_i=g_i  &(0,T)\times\Omega,
\\
\displaystyle\beta_i(x)\frac{\partial y_i}{\partial \nu_{A_i}}+\eta_i(x)y_i=0  &(0,T)\times\partial\Omega,  \\

\end{array} i\in\overline{1,n}
\right.\end{equation} 
where 
$$
c^{\textbf{y}}_i(t,x)=\int_0^1\frac{\partial}{\partial y_i}f_i(t,x,y_1(t,x),\ldots,y_{i-1}(t,x),\tau y_i(t,x),y_{i+1}(t,x),\dots, y_n(t,x))d\tau,
$$
and $ c^{\textbf{y}}_i$ is uniformly bounded in $L^\infty(Q)$ and Theorem \ref{thlinear} applies.

Take now the second case of (H3), \textit{i.e.} $f_i(t,x,y_1,\ldots,y_{i-1},0,y_{i+1},\dots, y_n)\leq 0,  i =\overline{1,n},   t>0,x\in\overline\Omega,\,\textbf{ y}\ge0$
  and
  $$
  \frac{\partial}{\partial y_l}f_i(t,x,\textbf{y})\leq0, i,l =\overline{1,n}, i\ne l,  t>0,x\in\overline\Omega,\, \textbf{y}\ge0.
  $$
  By a similar linearization mechanism,  $\textbf{y}\in \mathcal{F}_M$ is solution to the "linearized" system
  \begin{equation}\label{rd-sys-linS}
  \left\lbrace
  \begin{array}{ll}
  D_ty_i-\sum\limits_{j,k=1}^N D_j(a_i^{jk} D_ky_i)+\sum\limits_{k=1}^N b_i^k D_ky_i+\sum_{l=1}^nc^{\textbf{y}}_{il}(t,x)y_l=g_i  &(0,T)\times\Omega,
  \\
  \displaystyle\beta_i(x)\frac{\partial y_i}{\partial \nu_{A_i}}+\eta_i(x)y_i=0  &(0,T)\times\partial\Omega,  \\
  
  \end{array} i\in\overline{1,n}
  \right.\end{equation} 
  where 
  $$
  c^{\textbf{y}}_{il}(t,x)=\int_0^1\frac{\partial}{\partial y_l}f_i(t,x,\tau y_1(t,x),\ldots, \tau y_n(t,x))d\tau.
  $$
 Observe that  $ c_{il}^{\textbf{y}}$ are uniformly bounded in $L^\infty(Q)$ and satisfy the sign conditions in order to apply Theorem \ref{thlinear}  and conclude the proof.\fin

\end{document}